\documentclass[11pt]{article}
\usepackage[left=2.5cm,right=2.5cm,top=2.5cm,bottom=2.5cm,a4paper]{geometry}

\usepackage[a4paper]{geometry}
\usepackage{graphicx}
\usepackage{microtype}
\usepackage{siunitx}
\usepackage{booktabs}
\usepackage{graphics}
\usepackage{graphicx}
\usepackage{epsfig}
\usepackage{amsmath,amsfonts,amssymb,amsthm}
\usepackage{cleveref}
\usepackage{listings}
\usepackage{paralist}
\usepackage{sectsty}
\usepackage{datetime}
\pagestyle{headings}
\setlength{\parskip}{8pt}

\numberwithin{equation}{section}
\subsectionfont{\normalfont}

\newtheorem{theorem}{Theorem}[section]

\newtheorem{lemma}[theorem]{Lemma}
\newtheorem{corollary}[theorem]{Corollary}
\newtheorem{remark}[theorem]{Remark}
\newtheorem{example}[theorem]{Example}

\newtheorem{open problem}[theorem]{Open problem}

\providecommand{\keywords}[1]{\textbf{\textit{Key words---}} #1}

\begin{document}

\markboth{WonTae Hwang and Kyunghwan Song}
{$0<s<1$ rational $s$ }

\title{On the integer part of the reciprocal of the Riemann zeta function tail at certain rational numbers in the critical strip}

\author{WonTae Hwang and Kyunghwan Song\\School of Mathematics, Korea Institute for Advanced Study, Seoul, South Korea \\
Institute of Mathematical Sciences, Ewha Womans University, Seoul,  South Korea}

\maketitle

\begin{abstract}
We prove that the integer part of the reciprocal of the tail of $\zeta(s)$ at a rational number $s=\frac{1}{p}$ for any integer with $p \geq 5$ or $s=\frac{2}{p}$ for any odd integer with $p \geq 5$ can be described essentially as the integer part of an explicit quantity corresponding to it. To deal with the case when $s=\frac{2}{p},$ we use a result on the finiteness of integral points of certain curves over $\mathbb{Q}$. \\

\noindent
\keywords{Riemann zeta function tail, critical strip, Siegel's theorem} \\
{Mathematics Subject Classification 2010: 11M06, 11B83, 11G30}
\end{abstract}
\section{Introduction}

\qquad Among various kind of zeta functions in mathematics, one of the most famous and important zeta functions is the Riemann zeta function. For $s= \sigma + i t \in \mathbb{C}$ with $\sigma >1,$ consider the absolutely convergent infinite series $\displaystyle \sum_{n=1}^{\infty} \frac{1}{n^s}.$ It is well-known that this series admits an analytic continuation $\zeta(s)$ to the whole complex plane $\mathbb{C}$. If we restrict our attention to some rational numbers $0<s<1,$ then we have the following list of values of the Riemann zeta function:    
\begin{equation*}
\zeta \left(\frac{1}{2} \right) =-1.46035 \cdots,  \zeta \left(\frac{1}{3} \right) = -0.97336 \cdots, \zeta \left(\frac{1}{4} \right) =-0.813278 \cdots,    
\end{equation*}
\begin{equation*}
\zeta \left(\frac{1}{5} \right) =-0.733921 \cdots, \zeta \left(\frac{2}{3} \right) =-2.44758 \cdots, \zeta \left(\frac{2}{5} \right) =-1.1348 \cdots. 
\end{equation*}

Now, for an integer $n \geq 1$ and a real number $s$ with $0<s<1,$ we let
\begin{align*}
\zeta_n(s) &= \frac{1}{1-2^{1-s}} \cdot \sum_{k=n}^{\infty} \frac{(-1)^{k+1}}{k^s}, \\
A_{n,s} & = \left(\frac{1}{n^s} - \frac{1}{(n+1)^s}\right) + \left(\frac{1}{(n+2)^s} - \frac{1}{(n+3)^s}\right) + \cdots,  \\
\intertext{and}
B_{n,s} & = \left(-\frac{1}{n^s} + \frac{1}{(n+1)^s}\right) + \left(-\frac{1}{(n+2)^s} + \frac{1}{(n+3)^s}\right) + \cdots .
\end{align*}

Note that we have 
\begin{equation*}
\zeta_1(s)=\zeta(s)
\end{equation*}
and
\begin{equation}\label{eq:zetaAB}
\zeta_n(s) =
\begin{cases}
- \frac{1}{1-2^{1-s}} A_{n,s}, ~&~\mbox{if~$n$~is even}, \\
- \frac{1}{1-2^{1-s}} B_{n,s}, & ~\mbox{if~$n$~is odd}.
\end{cases}
\end{equation}

Along this line, D. Kim and K. Song \cite{Song2018} (resp. K. Song \cite{Song2019}) proved that we have 
\begin{equation*}
\left[\frac{1}{1-2^{1-s}} \cdot \zeta_n (s)^{-1} \right] = \left[(-1)^{n+1} \cdot 2 \left(n- \frac{1}{2} \right)^s \right]
\end{equation*}
for every integer $n \geq 1,$ at $s=\frac{1}{2}, \frac{1}{3}, \frac{1}{4}$ (resp. $s=\frac{2}{3}).$ In this paper, we extend the previous results to the case when either $s=\frac{1}{p}$ for any integer with $p \geq 5$ or $s=\frac{2}{p}$ for any odd integer with $p \geq 5.$

Our main result is summarized in the following
\begin{theorem}\label{main thm}
 Let $s=\frac{1}{p}$ for any integer with $p \geq 5$ or $s=\frac{2}{p}$ for any odd integer with $p \geq 5.$ Then there exists an integer $N >0$ such that we have
\begin{equation*}
\left[\frac{1}{1-2^{1-s}} \cdot \zeta_n (s)^{-1} \right] = \left[(-1)^{n+1} \cdot 2 \left(n- \frac{1}{2} \right)^s \right]
\end{equation*}
for every integer $n \geq N.$ 
\end{theorem}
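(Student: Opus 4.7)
My plan is to reduce the claim to a precise comparison between $A_{n,s}^{-1}$ and $2(n-\tfrac12)^s$, derive an asymptotic expansion for $A_{n,s}$ via a midpoint-rule Euler--Maclaurin argument, and then use a Diophantine input (elementary when $s=1/p$, Siegel's theorem when $s=2/p$) to guarantee that the difference between the two quantities never crosses an integer. Combining~(\ref{eq:zetaAB}) with the trivial identity $B_{n,s}=-A_{n,s}$ gives $\tfrac{1}{1-2^{1-s}}\zeta_n(s)^{-1}=(-1)^{n+1}A_{n,s}^{-1}$, so the target identity becomes $[(-1)^{n+1}A_{n,s}^{-1}]=[(-1)^{n+1}\cdot 2(n-\tfrac12)^s]$. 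Setting $y_n:=2(n-\tfrac12)^s$ and $E_n:=A_{n,s}^{-1}-y_n$, a short case analysis using $\lfloor -x\rfloor=-\lceil x\rceil$ for non-integer $x>0$ shows that both parities of $n$ reduce to the single inequality $\lceil y_n\rceil-y_n>E_n$, assuming $y_n\notin\mathbb{Z}$.

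For the expansion of $A_{n,s}$, I apply the symmetric Taylor identity
\[
f(n+2k)-f(n+2k+1)\;=\;-f'\!\left(n+2k+\tfrac12\right)-\tfrac{1}{24}f'''\!\left(n+2k+\tfrac12\right)-\cdots
\]
to $f(x)=x^{-s}$, and evaluate each sum $\sum_{k\ge 0}f^{(2j+1)}(n+\tfrac12+2k)$ by the midpoint-rule Euler--Maclaurin formula with step $h=2$ and left endpoint $n-\tfrac12$. Collecting the leading contributions gives
\[
A_{n,s}\;=\;\frac{1}{2(n-\tfrac12)^s}-\frac{s(s+1)}{16\,(n-\tfrac12)^{s+2}}+O\!\left(n^{-s-4}\right),
\]
and inversion yields $A_{n,s}^{-1}=y_n+\tfrac{s(s+1)}{4}(n-\tfrac12)^{s-2}+O(n^{s-4})$; in particular $E_n>0$ and $E_n=\Theta(n^{s-2})$.

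It remains to show $\lceil y_n\rceil-y_n>E_n$ for all $n\ge N$. For an integer $m$, the equality $m=y_n$ forces $m^p=2^{p-1}(2n-1)$ when $s=1/p$ and $m^p=2^{p-2}(2n-1)^2$ when $s=2/p$; both are ruled out by comparing the $2$-adic valuations of the two sides (using that $2n-1$ is odd). The mean value theorem then gives the baseline estimate $|m-y_n|\ge 1/(p m^{p-1})$, whose order is $n^{-(p-1)/p}$ when $s=1/p$ and $n^{-2(p-1)/p}=n^{s-2}$ when $s=2/p$. In the first family this strictly dominates $E_n=O(n^{1/p-2})$, so the elementary bound finishes the argument. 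In the second family, the baseline bound matches $E_n$ only up to constants, and the constants go the wrong way. To strengthen it I invoke Siegel's theorem on the family of superelliptic curves
\[
C_t\;:\; Y^p-2^{p-2}X^2=t\qquad(t\in\mathbb{Z}\setminus\{0\}),
\]
each of which is irreducible over $\mathbb{Q}$ and has genus $\ge 1$ (since $p\ge 5$ is odd); hence each has only finitely many integer points. The union over $|t|\le T$ remains finite for any fixed $T$, so $|m^p-2^{p-2}(2n-1)^2|>T$ holds for every sufficiently large $n$. Choosing $T$ to exceed the explicit comparison constant between $|m-y_n|$ and $E_n$ (which by the computation above is of the order $(p+2)\,2^{p-2}/p$) produces the required strict inequality $\lceil y_n\rceil-y_n>E_n$.

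The main obstacle is precisely this $s=2/p$ portion of the Diophantine step: the trivial gap bound ``distinct integers differ by at least $1$'' is only marginally too weak to beat the leading Euler--Maclaurin coefficient $s(s+1)/4$, and Siegel's finiteness theorem is exactly what lifts the integer gap past any prescribed constant, yielding the equality of integer parts for all $n\ge N$.
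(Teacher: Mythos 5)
Your overall strategy matches the paper's: bound the error $E_n=A_{n,s}^{-1}-2(n-\tfrac12)^s$ by $\Theta(n^{s-2})$, rule out $2(n-\tfrac12)^s\in\mathbb{Z}$ by $2$-adic valuation, and show that the gap to the nearest integer dominates $E_n$ — elementarily for $s=1/p$, and via Siegel's theorem for $s=2/p$. Your reduction of both parities to the single inequality $\lceil y_n\rceil-y_n>E_n$ via $B_{n,s}=-A_{n,s}$ is a clean reformulation of what the paper does by cases. Your Siegel curves $Y^p-2^{p-2}X^2=t$ are the paper's $C_m$ after the substitution $X=2y+1$, $t=-2^{p-1}-m$, so the Diophantine core is essentially identical (and your genus count $(p-1)/2\ge 2$ is correct). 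Your $s=1/p$ step (MVT plus $v_2$) is an equivalent restatement of the paper's Lemma \ref{no int lem}. The one substantive difference: the paper concludes from Lemma \ref{Siegel lem} directly that only finitely many $n$ admit an integer between $2(n^2-n+\tfrac14)^{s/2}$ and $2(n^2-n+\tfrac34)^{s/2}$, whereas you route through a quantitative comparison of the integer gap against $E_n$ with a threshold $T>\tfrac{(p+2)2^{p-2}}{p}$; both are valid, but the paper's phrasing avoids the constant-tracking.

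The genuine gap in your write-up is the analytic estimate. You assert, via a symmetric Taylor identity plus a ``midpoint-rule Euler--Maclaurin'' argument with step $h=2$, that
\[
A_{n,s}=\frac{1}{2(n-\tfrac12)^s}-\frac{s(s+1)}{16(n-\tfrac12)^{s+2}}+O(n^{-s-4}),
\]
and hence $0<E_n<\bigl(\tfrac{s(s+1)}{4}+o(1)\bigr)(n-\tfrac12)^{s-2}$. As presented this is only a heuristic: the two $O$-terms (Taylor remainder and Euler--Maclaurin remainder) are never bounded, and the positivity of $E_n$ — which you need to have $\lceil y_n\rceil\neq\lfloor y_n\rfloor$ matter — is not separately established. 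The paper replaces this with two exact, proved inequalities: the lower bound $2(n-\tfrac12)^s<A_{n,s}^{-1}$ comes from \cite[Theorem 1]{Song2018} (Theorem \ref{main tool1}), and the crucial sharp upper bound $A_{n,s}^{-1}<2(n^2-n+\tfrac34)^{s/2}$ is Lemma \ref{lem_2/3_1}, proved by an explicit monotonicity computation (Lemma \ref{lem2/3_0}). If you want to salvage the Euler--Maclaurin route, you must produce explicit remainder bounds giving a rigorous two-sided inequality of the form $y_n<A_{n,s}^{-1}<y_n+C(n-\tfrac12)^{s-2}$ for an explicit $C$ and all large $n$; once you have such a $C$, your Siegel argument with $T$ chosen accordingly does go through. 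So: correct architecture and Diophantine input, but the analytic inequality at the heart of the $s=2/p$ case is asserted rather than proved.
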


For more details, see Corollaries \ref{main cor 1} and \ref{main cor 2}. \\ 

This paper is organized as follows: In Section \ref{pre}, we first introduce some properties of $\zeta_n (s)$. Afterwards, we recall a theorem of Siegel on the integral points of a smooth algebraic curve over a number field (see Theorem \ref{thm_Siegel}). In Section \ref{1/p}, we deal with the case of $s=\frac{1}{p}$, using Theorem \ref{main tool2} below. In Section \ref{2/p}, we give a proof of Theorem \ref{main thm} for the case when $s=\frac{2}{p}$ by invoking a version of the previously introduced theorem of Siegel (see Theorem \ref{thm_hyper}).  

\section{Preliminaries}\label{pre}

\subsection{Properties of $\zeta_n (s)$}
\qquad In this section, we give some useful properties of $\zeta_n (s)$ in terms of the size of its reciprocal. To achieve our goal, we first need the following 

\begin{theorem}\label{main tool1}
Let $s$ be a real number with $0<s<1$. Then we have
\begin{equation*}
2\left(n-\frac{1}{2}\right)^{s} < A^{-1}_{n,s} < 2\left(n-\frac{1}{4}\right)^{s}
\end{equation*}
for every even integer $n \geq 2$, and 
\begin{equation*}
-2\left(n-\frac{1}{4}\right)^{s} < B^{-1}_{n,s} < -2\left(n-\frac{1}{2}\right)^{s} .
\end{equation*}
for every odd integer $n \geq 1$.
\end{theorem}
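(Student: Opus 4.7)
I would first observe the identity $B_{n,s}=-A_{n,s}$, which holds as formal series directly from the definitions (the two defining sums differ only by an overall sign). Thus the odd-$n$ statement for $B_{n,s}^{-1}$ is algebraically equivalent to the even-$n$ statement for $A_{n,s}^{-1}$, and the whole theorem reduces to proving, for every integer $n\ge 1$ and every $0<s<1$,
\[
\tfrac12\bigl(n-\tfrac14\bigr)^{-s}\;<\;A_{n,s}\;<\;\tfrac12\bigl(n-\tfrac12\bigr)^{-s}.
\]
Using $k^{-s}-(k+1)^{-s}=s\int_{k}^{k+1}x^{-s-1}\,dx$ and writing $g(x):=x^{-s-1}$, one has
\[
A_{n,s}\;=\;s\sum_{k=0}^{\infty}\int_{n+2k}^{n+2k+1}g(x)\,dx, \qquad \tfrac12(n-c)^{-s}\;=\;\tfrac{s}{2}\int_{n-c}^{\infty}g(x)\,dx.
\]
My strategy is to prove, for each $k\ge0$ with $a:=n+2k$, the two per-$k$ estimates
\[
\tfrac{1}{(a-1/4)^{s}}-\tfrac{1}{(a+7/4)^{s}}\;<\;2\!\left[\tfrac{1}{a^{s}}-\tfrac{1}{(a+1)^{s}}\right]\;<\;\tfrac{1}{(a-1/2)^{s}}-\tfrac{1}{(a+3/2)^{s}}.
\]
Each outer difference has the shape $(a-\alpha)^{-s}-(a+\beta)^{-s}$ with $\alpha+\beta=2$, so summation over $k\ge0$ telescopes the outer sums into $(n-\tfrac14)^{-s}$ and $(n-\tfrac12)^{-s}$, yielding $2A_{n,s}>(n-\tfrac14)^{-s}$ and $2A_{n,s}<(n-\tfrac12)^{-s}$.

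For the upper bound the per-$k$ inequality is equivalent, via the fundamental theorem of calculus, to
\[
\int_{a-1/2}^{a}g(x)\,dx+\int_{a+1}^{a+3/2}g(x)\,dx\;>\;\int_{a}^{a+1}g(x)\,dx.
\]
I plan to prove this by parametrizing all three integrals symmetrically about the common center $a+\tfrac12$: with $\Phi(t):=g(a+\tfrac12-t)+g(a+\tfrac12+t)$, the inequality becomes $\int_{1/2}^{1}\Phi(t)\,dt>\int_{0}^{1/2}\Phi(t)\,dt$. Since $g''(x)=(s+1)(s+2)x^{-s-3}>0$, the function $g$ is strictly convex, so $\Phi'(t)=g'(a+\tfrac12+t)-g'(a+\tfrac12-t)>0$; hence $\Phi$ is strictly increasing in $t$ and the inequality follows at once.

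The lower-bound per-$k$ inequality is
\[
\int_{a}^{a+1}g(x)\,dx\;>\;\int_{a-1/4}^{a}g(x)\,dx+\int_{a+1}^{a+7/4}g(x)\,dx,
\]
whose outer intervals have unequal lengths $\tfrac14$ and $\tfrac34$, so the clean symmetrization used above no longer applies. I would instead write the difference as $\int_{a-1/4}^{a+7/4}w(x)\,g(x)\,dx$, where $w\equiv+1$ on $[a,a+1]$ and $w\equiv-1$ on the two outer pieces, and verify by direct computation that $\int w(x)\,dx=0$ while $\int(x-a)\,w(x)\,dx=-\tfrac12$. Taylor-expanding $g$ around $x=a$ then produces a leading contribution $\tfrac12\cdot(-g'(a))>0$, and the convexity remainder $\int w(x)\,R(x)\,dx$ must be bounded uniformly in $a\ge1$ and $s\in(0,1)$ using the monotonicity and decay of $|g'(x)|=(s+1)x^{-s-2}$. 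This remainder bound is the main obstacle: unlike the upper bound, the lower bound admits no clean symmetrization, and the first-order margin $\tfrac12(-g'(a))$ is modest, so the second-order estimate must be sharp enough to preserve its positivity for all $a\ge1$. Once both term-by-term inequalities are in hand, the theorem follows immediately by summation and telescoping.
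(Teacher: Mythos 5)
Your overall framework — observing $B_{n,s}=-A_{n,s}$, reducing to $\tfrac12(n-\tfrac14)^{-s}<A_{n,s}<\tfrac12(n-\tfrac12)^{-s}$, passing to integral representations with $g(x)=x^{-s-1}$, and telescoping via per-$k$ inequalities of the shape $(a-\alpha)^{-s}-(a+2-\alpha)^{-s}$ — is sound, and your proof of the upper bound by symmetrizing about $a+\tfrac12$ and invoking the convexity of $g$ (so that $\Phi(t)=g(a+\tfrac12-t)+g(a+\tfrac12+t)$ is increasing) is complete and correct.

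The gap is exactly where you flag it: the lower-bound per-$k$ inequality is not actually proved. Unlike the upper bound, this inequality is very tight and will not yield to a crude second-order Taylor estimate. Concretely, the per-$k$ claim is equivalent to
\begin{equation*}
F(a,s):=2a^{-s}-2(a+1)^{-s}-\left(a-\tfrac14\right)^{-s}+\left(a+\tfrac74\right)^{-s}>0,
\end{equation*}
and at the extreme point $a=1$, $s\to1$ one finds $F(1,1)=1-\tfrac43+\tfrac{4}{11}=\tfrac{1}{33}$, i.e.\ a margin of order $10^{-2}$. Your first-order term is $-\tfrac12 g'(a)=\tfrac{s+1}{2}a^{-s-2}$, while the second-order moment you identify is $\int(x-a)^2w\,dx=\tfrac13-\tfrac1{192}-\tfrac{279}{192}=-\tfrac98$, so the naive second-order contribution is roughly $-\tfrac{9}{16}g''(a)$, and the ratio $\tfrac{9}{16}g''(a)\big/\tfrac12|g'(a)|=\tfrac{9(s+2)}{8a}$ exceeds $1$ for $a\lesssim3.4$ when $s$ is close to $1$. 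So for $a\in\{1,2,3\}$ a bound that evaluates $g''$ at a single point (e.g.\ $g''(a)$ or $g''(a-\tfrac14)$) does not show positivity; the argument would succeed only if one exploits the decay of $g''$ across the long right-hand interval $[a+1,a+\tfrac74]$, or treats the small-$a$ cases by a separate direct verification (itself nontrivial because it must hold uniformly in $s\in(0,1)$). None of this is carried out. You also cannot rescue it with a simple convexity estimate, since convexity of $g$ gives bounds in the wrong direction on the outer intervals (lower bounds where upper bounds are needed); and in fact the reformulated function $H(t)=2+(1+\tfrac74 t)^{-s}-2(1+t)^{-s}-(1-\tfrac t4)^{-s}$ is not monotone on $(0,1]$ (e.g.\ $H'(1)<0$ for $s$ near $1$), so a clean first-derivative argument is also ruled out. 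Until the remainder estimate for the lower bound is actually supplied — or a different decomposition that avoids the asymmetric $1/4$–$3/4$ split is found — the proof of $A_{n,s}^{-1}<2(n-\tfrac14)^{s}$ (and equivalently $B_{n,s}^{-1}>-2(n-\tfrac14)^s$) is incomplete.
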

\begin{proof}
For a proof, see \cite[Theorem 1]{Song2018}.
\end{proof}
In view of the equation (\ref{eq:zetaAB}), Theorem \ref{main tool1} has a nice consequence:

\begin{corollary}
For any real number $s$ with $0<s<1$, we have
\begin{gather*}
-2(1-2^{1-s}) \left(n-\frac{1}{2}\right)^s < \zeta_n(s)^{-1} < -2(1-2^{1-s})\left(n-\frac{1}{4}\right)^s
\end{gather*}
for every even integer $n \geq 2$, and
\begin{gather*}
2(1-2^{1-s}) \left(n-\frac{1}{4}\right)^s < \zeta_n(s)^{-1} < 2(1-2^{1-s})\left(n-\frac{1}{2}\right)^s
\end{gather*}
for every odd integer $n \geq 1$.
\end{corollary}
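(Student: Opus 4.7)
The corollary is essentially a rescaling of Theorem \ref{main tool1} through the identities recorded in equation (\ref{eq:zetaAB}), so the plan is simply to invert those identities, substitute the bounds for $A_{n,s}^{-1}$ and $B_{n,s}^{-1}$, and carefully track signs.

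The key preliminary observation is a sign check: for $0<s<1$ we have $1-s>0$, hence $2^{1-s}>1$, and therefore $1-2^{1-s}<0$. Equivalently, $-(1-2^{1-s})>0$, and this positivity is precisely what lets me multiply the inequalities from Theorem \ref{main tool1} without reversing them.

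For even $n\geq 2$, inverting the first case of (\ref{eq:zetaAB}) gives
\[
\zeta_n(s)^{-1} \;=\; -(1-2^{1-s})\,A_{n,s}^{-1}.
\]
Plugging in $2(n-\tfrac{1}{2})^{s} < A_{n,s}^{-1} < 2(n-\tfrac{1}{4})^{s}$ from Theorem \ref{main tool1} and multiplying throughout by the positive constant $-(1-2^{1-s})$ produces exactly the stated bounds
\[
-2(1-2^{1-s})\bigl(n-\tfrac{1}{2}\bigr)^{s} \;<\; \zeta_n(s)^{-1} \;<\; -2(1-2^{1-s})\bigl(n-\tfrac{1}{4}\bigr)^{s}.
\]
The case of odd $n\geq 1$ is treated identically: the second branch of (\ref{eq:zetaAB}) gives $\zeta_n(s)^{-1} = -(1-2^{1-s})\,B_{n,s}^{-1}$, and multiplying the inequality $-2(n-\tfrac{1}{4})^{s} < B_{n,s}^{-1} < -2(n-\tfrac{1}{2})^{s}$ by the positive constant $-(1-2^{1-s})$ delivers the remaining pair of inequalities.

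There is no genuine obstacle in this proof; the entire argument is a single substitution followed by a single rescaling. The only place where one could slip is the sign of $1-2^{1-s}$, so I would include a one-line sanity check confirming that the bounds are positive for even $n$ (matching $A_{n,s}>0$, hence $\zeta_n(s)^{-1}>0$) and negative for odd $n$ (matching $B_{n,s}<0$, hence $\zeta_n(s)^{-1}<0$), which also verifies that the displayed inequalities are oriented consistently.
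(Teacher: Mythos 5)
Your proof is correct and takes essentially the same route as the paper: the corollary there is stated as an immediate consequence of combining equation~(\ref{eq:zetaAB}) with Theorem~\ref{main tool1}, and the only nontrivial point is exactly the one you flag, namely that $1-2^{1-s}<0$ for $0<s<1$ so that multiplication by $-(1-2^{1-s})$ preserves the direction of the inequalities.
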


If we do not require the inequality of Theorem \ref{main tool1} to hold for every even or odd integer, as indicated above, then we can obtain a slightly better upper bound of the value of $\zeta_n (s)^{-1}$:
\begin{theorem}\label{main tool2}
	Let $\epsilon>0$ be given. Then for any real number $s$ with $0<s<1$, we have
	\begin{gather*}
	2\left(n-\frac{1}{2}\right)^s < A^{-1}_{n,s} < 2\left(n - \frac{1}{2} + \epsilon\right)^s
	\end{gather*}
	for every sufficiently large even integer $n$, and
	\begin{gather*}
	-2\left(n-\frac{1}{2} + \epsilon \right)^s < B^{-1}_{n,s} \leq -2\left(n - \frac{1}{2} \right)^s
	\end{gather*}
	for every sufficiently large odd integer $n$.
\end{theorem}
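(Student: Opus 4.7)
The lower bound $A_{n,s}^{-1} > 2(n-1/2)^s$ for even $n$, and the parallel bound $B_{n,s}^{-1} \leq -2(n-1/2)^s$ for odd $n$, are already contained in Theorem~\ref{main tool1}. Since $B_{n,s} = -A_{n,s}$, what remains in both cases is the single inequality
\[
A_{n,s} > \frac{1}{2(n-1/2+\epsilon)^s}
\]
for all sufficiently large $n$ of the required parity. In other words, the effective shift ``$1/4$'' in Theorem~\ref{main tool1} is not sharp, and the true asymptotic shift is $1/2$.

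My plan is to prove the refined asymptotic
\[
A_{n,s} = \frac{1}{2(n-1/2)^s} + O\!\left(n^{-s-2}\right).
\]
I would do this by writing each pair as an integral $(n+2k)^{-s} - (n+2k+1)^{-s} = s\int_{n+2k}^{n+2k+1} x^{-s-1}\,dx$, approximating each integral by the midpoint value $s(n+2k+1/2)^{-s-1}$ with an $O((n+2k)^{-s-3})$ error (even-order Taylor corrections cancel around the midpoint), and then matching $\sum_{k\ge 0} (n+2k+1/2)^{-s-1}$ with $\tfrac{1}{2}\int_{n-1/2}^{\infty} x^{-s-1}\,dx = \tfrac{1}{2s}(n-1/2)^{-s}$ via a further midpoint rule whose per-term error is also $O((n+2k)^{-s-3})$ and whose total is $O(n^{-s-2})$. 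Summing the two error contributions yields the claimed expansion. An equivalent route is to start from the integral representation $A_{n,s} = \tfrac{1}{\Gamma(s)}\int_0^\infty \tfrac{t^{s-1} e^{-nt}}{1+e^{-t}}\,dt$ and apply Watson's lemma to the expansion $\tfrac{1}{1+e^{-t}} = \tfrac{1}{2} + \tfrac{t}{4} + O(t^3)$; the resulting two leading coefficients in $A_{n,s}$ coincide with those in the Taylor expansion of $\tfrac{1}{2}(n-1/2)^{-s}$, so the difference is $O(n^{-s-2})$.

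Inverting the asymptotic yields $A_{n,s}^{-1} = 2(n-1/2)^s + O(n^{s-2})$, while a direct expansion gives $2(n-1/2+\epsilon)^s = 2(n-1/2)^s + 2s\epsilon(n-1/2)^{s-1} + O(n^{s-2})$. Because $0<s<1$, the positive term $2s\epsilon(n-1/2)^{s-1}$ is of exact order $n^{s-1}$ and eventually dominates the $O(n^{s-2})$ error, giving $A_{n,s}^{-1} < 2(n-1/2+\epsilon)^s$ for all sufficiently large $n$. The main obstacle is producing the refined asymptotic to an order beyond what sufficed for Theorem~\ref{main tool1}: the term-by-term comparisons used there only locate $A_{n,s}$ within a constant-order band, so one must track the cancellation that sharpens ``$1/4$'' to ``$1/2$'' and control the remainder to order $O(n^{-s-2})$, which is exactly what is needed to beat the $n^{s-1}$ gap exploited above.
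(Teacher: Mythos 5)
The paper itself does not prove Theorem~\ref{main tool2}; it simply cites \cite[Theorem 2]{Song2018}, so there is no internal proof to compare against. Judged on its own merits, your proposal is essentially correct: the reduction via $B_{n,s}=-A_{n,s}$ to the single inequality $A_{n,s}>\tfrac{1}{2}(n-\tfrac{1}{2}+\epsilon)^{-s}$ is right, the lower (resp.\ upper) bounds of Theorem~\ref{main tool1} already give the left-hand inequalities, and the refined asymptotic
\[
A_{n,s}=\tfrac{1}{2}\bigl(n-\tfrac{1}{2}\bigr)^{-s}+O\!\left(n^{-s-2}\right)
\]
is the decisive ingredient. I verified the asymptotic by your alternative route: $A_{n,s}=\tfrac{1}{\Gamma(s)}\int_0^\infty t^{s-1}\tfrac{e^{-nt}}{1+e^{-t}}\,dt$, $\tfrac{1}{1+e^{-t}}=\tfrac12+\tfrac{t}{4}+0\cdot t^2+O(t^3)$ (the $t^2$ coefficient vanishes because $\tfrac{1}{1+e^{-t}}-\tfrac12=\tfrac12\tanh(t/2)$ is odd), and Watson's lemma gives $A_{n,s}=\tfrac{1}{2n^s}+\tfrac{s}{4n^{s+1}}+O(n^{-s-3})$, which matches $\tfrac12(n-\tfrac12)^{-s}$ to the same order and differs by $O(n^{-s-2})$. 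The inversion and the final comparison with $2(n-\tfrac12+\epsilon)^s$, whose gap is of exact order $\epsilon\,n^{s-1}$ and hence eventually dominates the $O(n^{s-2})$ error for $0<s<1$, then close the argument.

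The only caveats are presentational: the double midpoint-rule route is left at the level of a sketch, and you should state explicitly that both error sums $\sum_k O((n+2k)^{-s-3})$ converge to $O(n^{-s-2})$ and that the inversion step uses $A_{n,s}\asymp n^{-s}$ to turn the additive $O(n^{-s-2})$ error into a multiplicative $1+O(n^{-2})$. These are routine and do not affect correctness. Whether this is the same method as in the cited source \cite{Song2018} cannot be determined from this paper alone; your argument via Watson's lemma (or the equivalent midpoint comparison) is cleaner and more systematic than the purely term-by-term comparisons used to prove the coarser Theorem~\ref{main tool1}, and it has the added benefit of exhibiting the sharp constant $-\tfrac12$ in the shift.
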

\begin{proof}
For a proof, see \cite[Theorem 2]{Song2018}.
\end{proof}

As before, combining the equation (\ref{eq:zetaAB}) and Theorem \ref{main tool2} yields the following
\begin{corollary}
	Let $\epsilon>0$ be given. Then for any real number $s$ with $0<s<1$, we have
        \begin{gather*}
	-2(1-2^{1-s}) \left(n-\frac{1}{2}\right)^s < \zeta_n(s)^{-1} < -2(1-2^{1-s})\left(n-\frac{1}{2} + \epsilon \right)^s
	\end{gather*}
	for every sufficiently large even integer $n$, and
	\begin{gather*}
	2(1-2^{1-s}) \left(n-\frac{1}{2} + \epsilon \right)^s < \zeta_n(s)^{-1} < 2(1-2^{1-s})\left(n-\frac{1}{2}\right)^s
	\end{gather*}
	for every sufficiently large odd integer $n$.
\end{corollary}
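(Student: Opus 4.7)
The plan is to deduce this corollary directly from Theorem \ref{main tool2} (the sharpened bounds on $A_{n,s}^{-1}$ and $B_{n,s}^{-1}$) by substituting into the relation (\ref{eq:zetaAB}), which expresses $\zeta_n(s)$ in terms of $A_{n,s}$ or $B_{n,s}$ according to the parity of $n$. Concretely, from (\ref{eq:zetaAB}) we have
\begin{equation*}
\zeta_n(s)^{-1} \;=\; -(1-2^{1-s})\cdot A_{n,s}^{-1} \quad (n \text{ even}), \qquad \zeta_n(s)^{-1} \;=\; -(1-2^{1-s})\cdot B_{n,s}^{-1} \quad (n \text{ odd}).
\end{equation*}

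The key observation is a sign check: for $0<s<1$ we have $2^{1-s}>1$, hence $1-2^{1-s}<0$, so the factor $c:=-(1-2^{1-s})$ appearing above is strictly positive. Multiplying an inequality by $c$ therefore preserves its direction. First I would handle the even case: Theorem \ref{main tool2} gives $2(n-\tfrac12)^s < A_{n,s}^{-1} < 2(n-\tfrac12+\epsilon)^s$ for all sufficiently large even $n$, and multiplying through by $c>0$ converts this into
\begin{equation*}
-2(1-2^{1-s})\left(n-\tfrac12\right)^s \;<\; \zeta_n(s)^{-1} \;<\; -2(1-2^{1-s})\left(n-\tfrac12+\epsilon\right)^s,
\end{equation*}
which is precisely the first claimed two-sided bound.

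Next I would treat the odd case in the same spirit: Theorem \ref{main tool2} yields $-2(n-\tfrac12+\epsilon)^s < B_{n,s}^{-1} \le -2(n-\tfrac12)^s$ for all sufficiently large odd $n$. Multiplying by the same positive constant $c$ preserves the direction of both inequalities and produces
\begin{equation*}
2(1-2^{1-s})\left(n-\tfrac12+\epsilon\right)^s \;<\; \zeta_n(s)^{-1} \;\le\; 2(1-2^{1-s})\left(n-\tfrac12\right)^s,
\end{equation*}
matching the second assertion (after noting that the right-hand endpoint is attained only in the trivial limiting sense and may be written with $<$ as in the statement, since the upper bound in Theorem \ref{main tool1} was already strict for odd $n \geq 1$).

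There is no real obstacle here; the content of the corollary is a bookkeeping translation of Theorem \ref{main tool2} through the identity (\ref{eq:zetaAB}). The only point requiring a moment of care is tracking the sign of $1-2^{1-s}$, which is negative on the critical strip $0<s<1$ and is the reason the even/odd inequalities get swapped in orientation when passing from $A_{n,s}^{-1}, B_{n,s}^{-1}$ to $\zeta_n(s)^{-1}$.
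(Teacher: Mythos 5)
Your argument is correct and is precisely the paper's intended proof: the corollary is stated immediately after Theorem \ref{main tool2} with only the remark that it follows by combining (\ref{eq:zetaAB}) with that theorem, and your write-up fills in exactly those steps, including the crucial sign check that $1-2^{1-s}<0$ on $0<s<1$. You were also right to notice that Theorem \ref{main tool2} gives only $B_{n,s}^{-1}\le -2(n-\tfrac12)^s$ while the corollary asserts a strict inequality, and that this is repaired by invoking the strict bound from Theorem \ref{main tool1}; that is a small gap in the paper's phrasing that your proof correctly closes.
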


\subsection{Siegel's theorem on integral points}
\qquad In this section, we briefly review a theorem of Siegel on the integral points of certain curves that are defined over a number field. \\

In the sequel, let $K$ be a number field, $S$ a finite set of places of $K,$ and $R_S$ the ring of $S$-integers in $K.$ Also, let $\overline{K}$ be an algebraic closure of $K.$ Then we have the following fundamental result:  
 
\begin{theorem}\label{thm_Siegel}
Let $C$ be a smooth projective curve of genus $g$ over $K,$ and let $f \in K(C)$ be a nonconstant function. If $g \geq 1,$ then the set 
\begin{equation*}
\{P \in C(K)~|~f(P) \in R_S \}
\end{equation*}
is finite.
\end{theorem}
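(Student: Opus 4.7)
The plan is to carry out the classical proof of Siegel's theorem, combining the Mordell--Weil theorem for the Jacobian of $C$ with Roth's theorem on Diophantine approximation of algebraic numbers. The strategy is a proof by contradiction: assume the set $\Sigma := \{P \in C(K) : f(P) \in R_S\}$ is infinite, and extract from $\Sigma$ a sequence of algebraic approximations to a fixed algebraic number that is too good to be allowed by Roth's theorem.

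First, I would enlarge $K$ to a finite extension and $S$ to a finite set of places above $S$ (both operations preserve the conclusion), so that $C$ acquires a $K$-rational base point $P_0$, and embed $\iota: C \hookrightarrow J$ into the Jacobian via $P \mapsto [P - P_0]$. By the Mordell--Weil theorem, $J(K)$ is a finitely generated abelian group, so for each integer $m \geq 1$ the quotient $J(K)/m J(K)$ is finite. Assuming $\Sigma$ is infinite, I would pass to a subsequence $\{P_n\}$ such that (i) all $\iota(P_n)$ lie in a single coset $Q_0 + m J(K)$, so that $\iota(P_n) = Q_0 + m Q_n$ and the canonical height satisfies $\hat h(Q_n) \asymp \hat h(\iota(P_n))/m^2$, and (ii) the $P_n$ converge $v$-adically to some pole $\alpha$ of $f$ for some $v \in S$ (such a place exists because $|f(P_n)|_v$ must go to infinity somewhere in $S$, since $f(P_n) \in R_S$ is an infinite sequence).

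Next, I would convert this local convergence into a strong Diophantine inequality. A local parameter $t$ at $\alpha$ pulls back along the multiplication-by-$m$ isogeny, restricted to an appropriate finite cover $\phi_m$ of $C$, to an essentially $m$-th power, which forces $|t(P_n)|_v$ to decay like $H(P_n)^{-c m}$ for some positive constant $c$. On the other hand, the approximants $t(P_n)$ lie in a number field of degree bounded independently of $n$ (the field of definition of a section of $\phi_m$) and have Weil height comparable to $H(P_n)^{1/m}$. Putting these bounds together yields an approximation of $\alpha$ by algebraic numbers of exponent $\asymp c m^2$, contradicting Roth's theorem for $m$ chosen sufficiently large.

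The main obstacle, which is where most of the work in a rigorous write-up lives, is the uniform bookkeeping: controlling the height comparisons at every place $v \in S$ simultaneously, keeping track of the ramification of $\phi_m$ at the poles of $f$, and ensuring that the approximating points lie in a number field of degree bounded independently of $n$ so that Roth's theorem genuinely applies. The genus-one case is considerably easier because $C = J$ already, so no nontrivial cover is needed; for $g \geq 2$ the full Mordell--Weil machinery and the embedding into $J$ are essential, which is why the hypothesis $g \geq 1$ (and in fact the use of the Jacobian when $g \geq 2$) cannot be dropped.
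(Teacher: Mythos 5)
The paper does not prove this statement itself; it simply cites Hindry--Silverman \cite[Theorem D.9.1]{3}, and your sketch follows precisely the classical Mordell--Weil plus Roth argument that appears there, so in substance you are reproducing the cited proof rather than giving an alternative one. Your high-level structure is correct (reduce to a field extension, embed into the Jacobian, exploit finiteness of $J(K)/mJ(K)$ to divide by $m$, cluster $v$-adically near a pole of $f$, and contradict Roth for $m$ large), and the claimed final exponent $\asymp c m^2$ is the right one.

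One step in the middle is misstated, though, and would break a literal write-up: you assert that the local parameter $t$ at $\alpha$ ``pulls back along the multiplication-by-$m$ isogeny $\ldots$ to an essentially $m$-th power, which forces $|t(P_n)|_v$ to decay like $H(P_n)^{-cm}$.'' That is backwards. The cover $\phi_m\colon C' \to C$ obtained by pulling back $[m]\colon J \to J$ is \emph{unramified} (this is exactly why one uses the Jacobian when $g\geq 2$), so $t\circ\phi_m$ is again a local parameter at each preimage $\alpha'$, not an $m$-th power; were it an $m$-th power you would lose, not gain, in approximation quality. The $m^2$ amplification has nothing to do with ramification of the parameter: it comes purely from the quadraticity of the canonical height, which gives $\hat h(Q_n)\approx \hat h(\iota(P_n))/m^2$ as you noted in step (i), hence $H(P_n')\approx H(P_n)^{1/m^2}$ for a lift $P_n'$ of $P_n$ while $|t\circ\phi_m(P_n')|_v\approx |t(P_n)|_v$ is unchanged. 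It is this mismatch---same smallness, much smaller height---that produces the approximation exponent $\asymp m^2$ feeding into Roth. Your subsequent sentence states the correct exponent, so the error is local, but the ``$m$-th power'' picture should be replaced by the unramified-cover/height-division picture to make the sketch sound.
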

\begin{proof}
For a proof, see \cite[Theorem D.9.1]{3}.
\end{proof}
In fact, this theorem is more general than what we actually need. We need to use a version of Theorem \ref{thm_Siegel} regarding a hyperelliptic curve, which we describe now: suppose that $S$ includes all the infinite places. 
\begin{theorem}\label{thm_hyper}
Let $f(X) \in K[X]$ be a polynomial of degree at least $3$ with distinct roots in $\overline{K}$. Then the equation $Y^2 = f(X)$ has only finitely many solutions $X,Y \in R_S.$
\end{theorem}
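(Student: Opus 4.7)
The plan is to deduce Theorem \ref{thm_hyper} from the more general Theorem \ref{thm_Siegel} by passing to the smooth projective model of the affine hyperelliptic curve cut out by $Y^2=f(X)$.

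First, I would set $n=\deg f\geq 3$ and consider the affine curve $C_0\subset\mathbb{A}^2_K$ defined by $Y^2-f(X)=0$. Because $f$ has only distinct roots in $\overline{K}$, the system $Y^2=f(X),\ 2Y=0,\ f'(X)=0$ has no common solution in $\overline{K}^2$, so $C_0$ is smooth. Let $C$ denote its smooth projective model over $K$, which exists and is unique since every smooth affine curve has a unique smooth projective completion over the same base field (constructed concretely by gluing $C_0$ to a second chart covering the point(s) at infinity).

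Next, I would invoke the standard genus computation for the hyperelliptic curve $Y^2=f(X)$: since $f$ has distinct roots and degree $n\geq 3$, Riemann--Hurwitz applied to the degree-$2$ coordinate map $X\colon C\to\mathbb{P}^1_K$ gives
\[
g(C)=\left\lfloor\tfrac{n-1}{2}\right\rfloor\geq 1.
\]
Theorem \ref{thm_Siegel} with the nonconstant function $X\in K(C)$ therefore yields the finiteness of
\[
\{P\in C(K)\ :\ X(P)\in R_S\}.
\]

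Finally, every $R_S$-solution $(X_0,Y_0)$ of $Y^2=f(X)$ corresponds to a point $P\in C_0(K)\subset C(K)$ with $X(P)=X_0\in R_S$; by the preceding finiteness only finitely many values of $X_0$ can occur, and for each such $X_0$ the equation $Y_0^2=f(X_0)$ admits at most two values of $Y_0$. Hence there are only finitely many $R_S$-solutions in total, which is the desired conclusion. The only technical point that requires real care — and it is entirely standard for hyperelliptic curves — is the construction of the smooth projective model $C$ over $K$ together with the verification that its genus is $\lfloor(n-1)/2\rfloor\geq 1$; once these are in hand, the reduction to Theorem \ref{thm_Siegel} is immediate. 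I would therefore expect this compactification/genus step to be the main (mild) obstacle of the argument.
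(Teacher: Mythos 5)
The paper does not prove Theorem \ref{thm_hyper} at all; it simply cites \cite[Theorem D.8.3]{3}. Your proposal instead derives it from the already-stated Theorem \ref{thm_Siegel}, and that derivation is correct. The affine curve $Y^2=f(X)$ is indeed smooth over $K$ (in characteristic zero a singular point would force $Y=0$, $f(X)=0$, $f'(X)=0$, contradicting distinctness of the roots), it is geometrically irreducible since $f$ is not a square, its smooth projective model $C$ has genus $\lfloor(n-1)/2\rfloor\geq 1$ by Riemann--Hurwitz applied to the degree-$2$ map $X\colon C\to\mathbb{P}^1$, and applying Theorem \ref{thm_Siegel} to the nonconstant function $X\in K(C)$ gives finiteness of $\{P\in C(K):X(P)\in R_S\}$, which contains the image of every $R_S$-solution. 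The only small caveat worth flagging: in \cite{3} the hyperelliptic statement (D.8.3) is established \emph{before} the general genus-$\geq 1$ Siegel theorem (D.9.1), so within that reference the logical order is the reverse of yours; your reduction is nevertheless valid as a consequence of Theorem \ref{thm_Siegel} as stated in this paper, and it has the advantage of making explicit exactly where the hypotheses $\deg f\geq 3$ and distinctness of roots are used (to guarantee smoothness and $g\geq 1$).
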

\begin{proof}
For a proof, see \cite[Theorem D.8.3]{3}.
\end{proof}

\begin{example}\label{main exam}
Let $K=\mathbb{Q}$, $S=\{2, \infty\}$, and let $f(X)=\frac{1}{32}X^5 + \frac{17}{32}.$ Then the equation $Y^2 = f(X)$ has only finitely many solutions in $R_S= \mathbb{Z} \left[\frac{1}{2} \right].$ This example is closely related to the case of $s=\frac{2}{5}$ and $m=1$ in Lemma \ref{Siegel lem} below.
\end{example}

\section{The case of $s = \frac{1}{p}$}\label{1/p}
Throughout this section, let $p \geq 5$ be a fixed integer and let $s=\frac{1}{p}$. 
\begin{lemma}\label{no int lem}
Let $n \geq 1$ be an integer. Then there is no integer between $ 2 \left(n - \frac{1}{2} \right)^s$ and $ 2 \left(n - \frac{1}{2} +\frac{1}{2^{p+1}} \right)^s.$
\end{lemma}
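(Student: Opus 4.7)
The plan is to argue by contradiction. Suppose there is an integer $m$ lying between $2(n-\frac{1}{2})^s$ and $2(n-\frac{1}{2}+\frac{1}{2^{p+1}})^s$. Since $s=1/p$, raising both bounds to the $p$-th power gives
\begin{equation*}
2^p\!\left(n-\tfrac{1}{2}\right) \;\leq\; m^p \;\leq\; 2^p\!\left(n-\tfrac{1}{2}+\tfrac{1}{2^{p+1}}\right),
\end{equation*}
which I rewrite as
\begin{equation*}
2^{p-1}(2n-1) \;\leq\; m^p \;\leq\; 2^{p-1}(2n-1)+\tfrac{1}{2}.
\end{equation*}

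Next I would observe that the interval on the right has length $1/2<1$, so the only integer it could contain is its left endpoint $2^{p-1}(2n-1)$. Since $m^p$ is a non-negative integer, this forces the equality $m^p = 2^{p-1}(2n-1)$.

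The final step is a $2$-adic valuation argument. Since $2n-1$ is odd, $v_2\bigl(2^{p-1}(2n-1)\bigr)=p-1$. On the other hand, $v_2(m^p)=p\cdot v_2(m)$, which is a multiple of $p$. Thus $p \mid p-1$, which is impossible for $p \geq 2$ (and in particular for $p\geq 5$). This contradicts the assumption that such an integer $m$ exists, proving the lemma.

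There is no real obstacle here; the argument is short once one notices the right move, namely to clear the radical by raising to the $p$-th power so that the cushion $\frac{1}{2^{p+1}}$ becomes exactly $\frac{1}{2}$. That precise choice of cushion is what forces the interval after exponentiation to be shorter than $1$, leaving only one candidate integer and allowing the $v_2$ parity obstruction to kick in.
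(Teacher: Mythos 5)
Your proof is correct and shares the paper's key move, namely raising to the $p$-th power so that the cushion $\tfrac{1}{2^{p+1}}$ becomes exactly $\tfrac{1}{2}$. However, your final $2$-adic valuation step is unnecessary, and the need for it stems from a small imprecision earlier: since the hypothetical integer $m$ satisfies the \emph{strict} inequalities $2\left(n-\tfrac12\right)^s < m < 2\left(n-\tfrac12+\tfrac1{2^{p+1}}\right)^s$, and all quantities are positive, raising to the $p$-th power preserves strictness and yields
\begin{equation*}
2^{p-1}(2n-1) \;<\; m^p \;<\; 2^{p-1}(2n-1)+\tfrac{1}{2}.
\end{equation*}
As $2^{p-1}(2n-1)$ is an integer, there is no integer strictly between it and itself plus $\tfrac12$, so the contradiction is immediate; this is precisely the paper's one-line argument. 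By writing $\leq$ instead of $<$ you artificially admitted the left endpoint as a candidate value for $m^p$, and then needed the $v_2$ obstruction to dismiss it. That obstruction is valid (indeed $v_2(m^p)=p\,v_2(m)$ cannot equal $p-1$), so nothing is wrong, but the argument is cleaner if you simply keep the inequalities strict from the start.
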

\begin{proof}
Suppose on the contrary that there is an integer $x$ with $2 \left(n - \frac{1}{2} \right)^s < x < 2 \left(n - \frac{1}{2} +\frac{1}{2^{p+1}} \right)^s.$ It follows that 
\begin{equation*}
2^p n - 2^{p-1} < x^p < 2^p n - 2^{p-1} +\frac{1}{2},
\end{equation*}
which is absurd. \\

This completes the proof.
\end{proof}
By a similar argument, we can also have the following
\begin{lemma}\label{no int lem1}
Let $n \geq 1$ be an integer. Then there is no integer between $ -2 \left(n - \frac{1}{2} + \frac{1}{2^{p+1}} \right)^s$ and $ -2 \left(n - \frac{1}{2} \right)^s.$
\end{lemma}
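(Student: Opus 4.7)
The plan is to mirror the proof of Lemma \ref{no int lem} almost verbatim, exploiting the fact that $s = 1/p$ with integer $p \geq 5$ lets us clear denominators by raising to the $p$-th power.

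First, I would argue by contradiction: suppose there is an integer $x$ with
\[
-2\left(n - \tfrac{1}{2} + \tfrac{1}{2^{p+1}}\right)^{s} < x < -2\left(n - \tfrac{1}{2}\right)^{s}.
\]
Both bounds are negative (since $n \geq 1$ gives $n - 1/2 > 0$), and $t \mapsto t^{s}$ is strictly increasing on $(0,\infty)$, so the inequality is consistent. In particular $x$ is a negative integer, and writing $y = -x$, which is a positive integer, multiplying the displayed inequality by $-1$ yields
\[
2\left(n - \tfrac{1}{2}\right)^{s} < y < 2\left(n - \tfrac{1}{2} + \tfrac{1}{2^{p+1}}\right)^{s}.
\]
This is exactly the situation already treated in Lemma \ref{no int lem}.

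Next, I would raise to the $p$-th power. Since every quantity in sight is positive and $t \mapsto t^{p}$ is monotone, I obtain
\[
2^{p}\left(n - \tfrac{1}{2}\right) < y^{p} < 2^{p}\left(n - \tfrac{1}{2} + \tfrac{1}{2^{p+1}}\right),
\]
i.e.\ $2^{p}n - 2^{p-1} < y^{p} < 2^{p}n - 2^{p-1} + \tfrac{1}{2}$. Since $y \in \mathbb{Z}$, the quantity $y^{p}$ is an integer, but the open interval $(2^{p}n - 2^{p-1},\ 2^{p}n - 2^{p-1} + \tfrac{1}{2})$ contains no integer, a contradiction.

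There is essentially no obstacle beyond bookkeeping: the only subtle point is remembering that the direction of the inequality flips correctly when negating, which is handled by passing from $x$ to $y = -x$ and then reducing to the positive case already proved. One could even just cite Lemma \ref{no int lem} directly after performing the substitution $y = -x$, shortening the proof to a single substitution step.
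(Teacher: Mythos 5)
Your proof is correct and takes essentially the approach the paper intends: the paper states Lemma \ref{no int lem1} follows "by a similar argument" to Lemma \ref{no int lem}, and your reduction via $y=-x$ to the positive case, followed by raising to the $p$-th power, is exactly that argument made explicit.
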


Combining all these results, we have:
\begin{corollary}\label{main res1}
There exist integers $n_0, n_1>0$ such that 
\begin{gather*}
[A_{n,s}^{-1}] = \left[ 2\left( n-\frac{1}{2} \right)^s \right]
\end{gather*}
for every even integer $n \geq n_0$, and 
\begin{gather*}
[B_{n,s}^{-1}] = \left[ -2\left( n-\frac{1}{2} \right)^s \right]
\end{gather*}
for every odd integer $n \geq n_1.$
\end{corollary}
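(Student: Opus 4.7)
My plan is to combine Theorem~\ref{main tool2} (a narrow two-sided bound on $A_{n,s}^{-1}$ and $B_{n,s}^{-1}$) with Lemmas~\ref{no int lem} and~\ref{no int lem1} (no integers in the corresponding tiny window), specialising the free parameter $\epsilon$ in Theorem~\ref{main tool2} to match the window size in the two lemmas.

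More concretely, I would apply Theorem~\ref{main tool2} with $\epsilon=\tfrac{1}{2^{p+1}}$. This yields an integer $n_0>0$ such that for every even $n\geq n_0$,
\begin{equation*}
2\!\left(n-\tfrac{1}{2}\right)^{s}\;<\;A_{n,s}^{-1}\;<\;2\!\left(n-\tfrac{1}{2}+\tfrac{1}{2^{p+1}}\right)^{s},
\end{equation*}
and an integer $n_1>0$ such that for every odd $n\geq n_1$,
\begin{equation*}
-2\!\left(n-\tfrac{1}{2}+\tfrac{1}{2^{p+1}}\right)^{s}\;<\;B_{n,s}^{-1}\;\leq\;-2\!\left(n-\tfrac{1}{2}\right)^{s}.
\end{equation*}
By Lemma~\ref{no int lem} (resp.\ Lemma~\ref{no int lem1}) there is no integer lying strictly between the two endpoints of the first (resp.\ second) interval. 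Therefore $A_{n,s}^{-1}$ and $2(n-\tfrac{1}{2})^{s}$ lie in a common integer interval $[m,m+1)$, and similarly for the odd case.

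The only subtle point is to rule out that the endpoint $\pm 2(n-\tfrac{1}{2})^{s}$ is itself an integer (which would a priori allow the floors to differ by $1$). Assume $2(n-\tfrac{1}{2})^{1/p}=m\in\mathbb{Z}$; raising to the $p$th power gives $m^{p}=2^{p-1}(2n-1)$, so $v_{2}(m^{p})=p-1$, forcing $p\mid p-1$, a contradiction. Hence the endpoint is irrational and the floors of $A_{n,s}^{-1}$ and $2(n-\tfrac{1}{2})^{s}$ coincide; the odd case is identical with $-B_{n,s}^{-1}$ in place of $A_{n,s}^{-1}$.

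I do not expect a genuine obstacle here: once one notices that the window $\epsilon=2^{-p-1}$ appearing in Lemma~\ref{no int lem} is exactly of the form allowed by Theorem~\ref{main tool2}, the statement is essentially a bookkeeping combination of the two. The only thing worth flagging is the integrality check at the boundary, which is the kind of $2$-adic valuation observation that is easy to miss but trivial to verify.
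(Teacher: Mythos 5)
Your proof is exactly the paper's proof: specialise Theorem~\ref{main tool2} to $\epsilon=\tfrac{1}{2^{p+1}}$ and combine with Lemmas~\ref{no int lem} and~\ref{no int lem1}. The one addition you make --- ruling out that $2\bigl(n-\tfrac12\bigr)^{1/p}$ is itself an integer via the $2$-adic valuation $v_2\bigl(m^p\bigr)=p\,v_2(m)\neq p-1$ --- is not in the paper's proof, and it is a genuine (if small) gap there: in the odd case the two-sided bound gives $B_{n,s}^{-1}<-2(n-\tfrac12)^s$, so if the right endpoint were an integer $-m$ the floors would be $-(m+1)$ and $-m$ respectively. Your observation closes that loophole cleanly; keep it.
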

\begin{proof}
Let $\epsilon = \frac{1}{2^{p+1}}.$ By Theorem \ref{main tool2}, there exist integers $n_0, n_1>0$ such that 
\begin{equation*}
2 \left(n - \frac{1}{2} \right)^s < A_{n,s}^{-1} < 2 \left(n - \frac{1}{2} +\frac{1}{2^{p+1}} \right)^s
\end{equation*}
for every even integer $n \geq n_0,$ and 
\begin{equation*}
-2 \left(n - \frac{1}{2}+\frac{1}{2^{p+1}} \right)^s < B_{n,s}^{-1} < -2 \left(n - \frac{1}{2} \right)^s
\end{equation*}
for every odd integer $n \geq n_1.$ Then by Lemmas \ref{no int lem} and \ref{no int lem1}, it follows that $[A_{n,s}^{-1}] = \left[ 2\left( n-\frac{1}{2} \right)^s \right]$ for every even integer $n \geq n_0,$ and $[B_{n,s}^{-1}] = \left[ -2\left( n-\frac{1}{2} \right)^s \right]$ for every odd integer $n \geq n_1,$ as desired. \\

This completes the proof.
\end{proof}
An immediate consequence of Corollary \ref{main res1} is the following
\begin{corollary}\label{main cor 1}
There exists an integer $N >0$ such that we have
\begin{equation*}
\left[\frac{1}{1-2^{1-s}} \cdot \zeta_n (s)^{-1} \right] = \left[ (-1)^{n+1} \cdot 2 \left(n - \frac{1}{2} \right)^s \right]
\end{equation*}
for every integer $n \geq N.$
\end{corollary}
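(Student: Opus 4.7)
The plan is to reduce the statement to \Cref{main res1} via the algebraic identity (\ref{eq:zetaAB}), and then to handle the sign discrepancy with a short direct argument using the same "no integer in a short interval" idea.

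First, I would unfold (\ref{eq:zetaAB}) to rewrite $\frac{1}{1-2^{1-s}}\zeta_n(s)^{-1}$ as $-A_{n,s}^{-1}$ when $n$ is even, and as $-B_{n,s}^{-1}$ when $n$ is odd. In particular the target identity becomes
\[
\left[-A_{n,s}^{-1}\right]=\left[-2\!\left(n-\tfrac{1}{2}\right)^{s}\right] \quad (n \text{ even}),
\qquad
\left[-B_{n,s}^{-1}\right]=\left[2\!\left(n-\tfrac{1}{2}\right)^{s}\right]\quad (n \text{ odd}).
\]
So the claim is no longer about $\zeta_n(s)^{-1}$, just about the signs of $A_{n,s}^{-1}$ and $B_{n,s}^{-1}$.

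Next, I would apply \Cref{main tool2} with the same choice $\varepsilon = \frac{1}{2^{p+1}}$ used in the proof of \Cref{main res1}. For even $n$ sufficiently large this yields
\[
-2\!\left(n-\tfrac{1}{2}+\tfrac{1}{2^{p+1}}\right)^{s} < -A_{n,s}^{-1} < -2\!\left(n-\tfrac{1}{2}\right)^{s},
\]
and for odd $n$ sufficiently large
\[
2\!\left(n-\tfrac{1}{2}\right)^{s} < -B_{n,s}^{-1} < 2\!\left(n-\tfrac{1}{2}+\tfrac{1}{2^{p+1}}\right)^{s}.
\]
Then \Cref{no int lem} (odd case) and \Cref{no int lem1} (even case) say that there is no integer strictly inside either of these short intervals, so the floors of $-A_{n,s}^{-1}$ and $-B_{n,s}^{-1}$ agree with those of their right endpoints, which is exactly what the corollary demands.

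Taking $N = \max(n_0, n_1)$ where $n_0, n_1$ are the thresholds produced by \Cref{main res1} (and bumping $N$ up if necessary so that both inequality chains above hold), the desired equality is proved for every integer $n \geq N$. I do not anticipate any real obstacle here: the only mildly nontrivial point is to avoid the naive but false identity $[-x] = -[x]$, and this is precisely why the proof is routed through the inequality chains rather than through the floor identities of \Cref{main res1} directly. Everything else is bookkeeping between parity cases and signs.
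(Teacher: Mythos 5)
Your proposal is correct and, at bottom, follows the same route as the paper: reduce via equation (\ref{eq:zetaAB}) to statements about $A_{n,s}^{-1}$ and $B_{n,s}^{-1}$, then combine the $\epsilon$-sandwiching of Theorem~\ref{main tool2} (with $\epsilon = 2^{-(p+1)}$) with Lemmas~\ref{no int lem} and~\ref{no int lem1}. Where you add genuine value is in noticing that the paper's one-line proof (``this follows from equation (\ref{eq:zetaAB}) and Corollary~\ref{main res1}'') is not quite a formal deduction, because $[-x] \neq -[x]$ in general; you therefore correctly re-run the interval argument on the sign-flipped quantities rather than citing Corollary~\ref{main res1} directly, and you correctly swap which no-integer lemma serves which parity (Lemma~\ref{no int lem1} for even $n$, Lemma~\ref{no int lem} for odd $n$ --- the opposite pairing from the proof of Corollary~\ref{main res1}).

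One small step remains unstated in your write-up: to conclude that $\bigl[-A_{n,s}^{-1}\bigr]$ equals the floor of the right endpoint $-2\bigl(n-\tfrac{1}{2}\bigr)^{s}$ from ``no integer strictly inside the interval,'' you also need that $-2\bigl(n-\tfrac{1}{2}\bigr)^{s}$ is not itself an integer (if it were, a point strictly to its left would have floor one less). This is true: if $2\bigl(n-\tfrac{1}{2}\bigr)^{1/p}=k$ were an integer, then $2^{p-1}(2n-1)=k^{p}$, but the $2$-adic valuation of the left side is $p-1$ while that of the right side is a multiple of $p$, a contradiction. The paper's own appeal to Corollary~\ref{main res1} implicitly relies on the same fact, so you are not omitting anything the paper supplies; it is simply worth spelling out to close the argument.
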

\begin{proof}
This follows from the equation (\ref{eq:zetaAB}) and Corollary \ref{main res1}.
\end{proof}

\begin{remark}\label{rem 1}
If $p \in \{2,3,4 \},$ then we may take $N=1$ in view of \cite[Corollary 6]{Song2018}.
\end{remark}

\section{The case of $s = \frac{2}{p}$}\label{2/p}
Throughout this section, let $p \geq 5$ be a fixed odd integer and let $s=\frac{2}{p}$. \\

To begin with, we introduce one useful inequality:
\begin{lemma}\label{lem2/3_0}
For any real number $x \geq 2,$ we have
\begin{equation*}
\left(\left(1-\frac{1}{4x}\right)^2 + \frac{1}{8x^2}\right)^{-1/p-1} \cdot \left(1-\frac{1}{4x}\right) + \left(\left(1+\frac{1}{4x}\right)^2 + \frac{1}{8x^2}\right)^{-1/p-1} \cdot \left(1+\frac{1}{4x}\right) < 2.
\end{equation*}
\end{lemma}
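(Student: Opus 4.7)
I would substitute $u = \tfrac{1}{4x}$, which converts $x \geq 2$ into $u \in (0, \tfrac{1}{8}]$ and reduces $\bigl(1 \pm \tfrac{1}{4x}\bigr)^2 + \tfrac{1}{8x^2}$ to $1 \pm 2u + 3u^2$. Writing $\beta = 1 + \tfrac{1}{p}$, $A = 1 - 2u + 3u^2$, and $B = 1 + 2u + 3u^2$, the inequality to prove becomes
\[
f(u) := (1-u)A^{-\beta} + (1+u)B^{-\beta} < 2.
\]
Because $f(0) = 2$, it suffices to show $f'(u) < 0$ for every $u \in (0, \tfrac{1}{8}]$.

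Differentiating directly gives
\[
f'(u) = A^{-\beta-1} M(u) - B^{-\beta-1} M(-u), \qquad M(u) := (2\beta-1)(1+3u^2) - 2(4\beta-1) u,
\]
and a short discriminant calculation shows that the smaller positive root of the quadratic $M$ exceeds $\tfrac{1}{8}$ for every $\beta \in (1, \tfrac{6}{5}]$, so $M(u), M(-u) > 0$ throughout $[0, \tfrac{1}{8}]$ for every $p \geq 5$. Under this positivity, $f'(u) < 0$ is equivalent to $\tfrac{M(u)}{M(-u)} < \bigl(\tfrac{A}{B}\bigr)^{\beta+1}$.

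The key step is a pair of linear-fractional factorizations. Setting $q = \tfrac{2u}{1+3u^2}$ and $\rho = \tfrac{4\beta-1}{2\beta-1}$, a direct simplification yields $\tfrac{A}{B} = \tfrac{1-q}{1+q}$ and $\tfrac{M(u)}{M(-u)} = \tfrac{1 - \rho q}{1 + \rho q}$. One easily checks $\rho q < 1$ on the range (from $\rho \leq 3$ and $q \leq \tfrac{16}{67}$), so I may take logarithms and use the identity $\log \tfrac{1+x}{1-x} = 2 \tanh^{-1}(x)$ to rewrite the claim as
\[
\tanh^{-1}(\rho q) > (\beta + 1) \tanh^{-1}(q).
\]

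The conclusion then reduces to two elementary monotonicity facts. First, $\tanh^{-1}(x)/x = \sum_{k \geq 0} \tfrac{x^{2k}}{2k + 1}$ has nonnegative coefficients and so is strictly increasing on $(0, 1)$; this gives $\tanh^{-1}(\rho q) > \rho \tanh^{-1}(q)$ because $\rho > 1$ and $q > 0$. Second, the identity $\rho - (\beta + 1) = \tfrac{\beta(3 - 2\beta)}{2\beta - 1}$ shows $\rho > \beta + 1$ precisely when $\beta < \tfrac{3}{2}$, a condition guaranteed by $p \geq 5$. Chaining, $\tanh^{-1}(\rho q) > \rho \tanh^{-1}(q) > (\beta + 1) \tanh^{-1}(q)$, as required. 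I expect the main obstacle to be spotting the linear-fractional factorizations of $A/B$ and $M(u)/M(-u)$, since once these are in hand the rest of the argument is a routine monotonicity comparison.
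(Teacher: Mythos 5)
Your proof is correct, and it takes a genuinely different --- and much cleaner --- route from the paper's. Both arguments begin the same way: reduce to showing that the expression, viewed as a function of $u = \frac{1}{4x}$ (or equivalently of $\alpha = 4x$), is monotone on the relevant interval, then differentiate. You both arrive at the equivalent inequality
\begin{equation*}
\frac{M(u)}{M(-u)} < \left(\frac{A}{B}\right)^{\beta+1},
\end{equation*}
(which in the paper's $\alpha$-variables reads $\frac{(-p-2)\alpha^2 + (6p + 8)\alpha + (-3p-6)}{(-p-2)\alpha^2 + (-6p-8)\alpha + (-3p-6)} < \bigl( \frac{\alpha^2 + 2\alpha + 3}{\alpha^2 - 2\alpha + 3} \bigr)^{-2-1/p}$). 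From this point the two proofs diverge sharply. The paper bounds the exponent $2 + 1/p$ above by $7/3$, clears denominators, and verifies a single degree-$19$ polynomial inequality in $\alpha$ and $p$ by a long grouping-and-bounding argument. You instead notice that both sides are linear-fractional in the auxiliary variable $q = \frac{2u}{1+3u^2}$, namely $\frac{A}{B} = \frac{1-q}{1+q}$ and $\frac{M(u)}{M(-u)} = \frac{1-\rho q}{1+\rho q}$ with $\rho = \frac{4\beta - 1}{2\beta - 1}$, which collapses the whole problem to the claim $\tanh^{-1}(\rho q) > (\beta+1)\tanh^{-1}(q)$. That claim follows in two lines from the positivity of the Taylor coefficients of $\tanh^{-1}(x)/x$ (giving $\tanh^{-1}(\rho q) > \rho \tanh^{-1}(q)$) together with the identity $\rho - (\beta+1) = \frac{\beta(3-2\beta)}{2\beta-1}$ (giving $\rho > \beta + 1$ whenever $\beta < 3/2$). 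I checked the supporting computations: $M(u) = (2\beta-1)(1+3u^2) - 2(4\beta-1)u$ does arise from the derivative; $M(1/8) = \frac{70\beta-51}{64} > 0$ and the vertex of $M$ lies to the right of $1/8$, so $M > 0$ on $[0,1/8]$; $q \leq 16/67$ and $\rho < 3$ give $\rho q < 1$; and the discriminant and identity computations all check out. Your argument buys a real simplification --- no degree-$19$ polynomial --- and as a side effect it works for all $\beta < \tfrac{3}{2}$, i.e. all $p \geq 3$, whereas the paper's written computation uses $p \geq 5$ in the choice of the exponent $7/3$.
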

\begin{proof}
For convenience, let $\alpha=4x$ and $h(\alpha) = \left(\left(1-\frac{1}{\alpha}\right)^2 + \frac{2}{\alpha^2}\right)^{-1/p-1} \cdot \left(1-\frac{1}{\alpha}\right)$. (Note that $\alpha \geq 8$.) Then it suffices to show that $h(\alpha) + h(-\alpha)$ is a strictly increasing function because we have
	\begin{gather*}
	\lim_{\alpha \rightarrow \infty} \left(\left(\left(1-\frac{1}{\alpha}\right)^2 + \frac{2}{\alpha^2}\right)^{-1/p-1} \cdot \left(1-\frac{1}{\alpha}\right) + \left(\left(1+\frac{1}{\alpha}\right)^2 + \frac{2}{\alpha^2}\right)^{-1/p-1} \cdot \left(1+\frac{1}{\alpha}\right)\right) = 2.
	\end{gather*} 
	Indeed, we will use the first derivative test, as follows: note first that we have
	\begin{align*}
	& \frac{d\left(h(\alpha) + h(-\alpha)\right)}{d \alpha} \\
	& = \frac{d \left(\left(\left(1-\frac{1}{\alpha}\right)^2 + \frac{2}{\alpha^2}\right)^{-1/p-1} \cdot \left(1-\frac{1}{\alpha}\right) + \left(\left(1+\frac{1}{\alpha}\right)^2 + \frac{2}{\alpha^2}\right)^{-1/p-1} \cdot \left(1+\frac{1}{\alpha}\right)\right)}{d \alpha} \\
	& = \frac{1}{\alpha^4 p}\Bigg( \alpha^2 p\left( \left( \frac{\alpha^2 - 2\alpha + 3}{\alpha^2} \right)^{-1-1/p} - \left(\frac{\alpha^2 + 2\alpha + 3}{\alpha^2} \right)^{-1-1/p}   \right) \\
	& \quad - 2(p+1) \left(  (\alpha - 1)(\alpha - 3) \left( \frac{\alpha^2 - 2\alpha + 3}{\alpha^2} \right)^{-2-1/p} -(\alpha +1)(\alpha +3) \left( \frac{\alpha^2 + 2\alpha + 3}{\alpha^2} \right)^{-2-1/p}   \right)\Bigg).
	\end{align*}
	Since $\alpha^4 p >0$, we need to show that
	\begin{align*}
	& \alpha^2 p\left( \left( \frac{\alpha^2 - 2\alpha + 3}{\alpha^2} \right)^{-1-1/p} - \left(\frac{\alpha^2 + 2\alpha + 3}{\alpha^2} \right)^{-1-1/p}   \right) \\
	& \quad - 2(p+1) \left(  (\alpha - 1)(\alpha - 3) \left( \frac{\alpha^2 - 2\alpha + 3}{\alpha^2} \right)^{-2-1/p} -(\alpha +1)(\alpha +3) \left( \frac{\alpha^2 + 2\alpha + 3}{\alpha^2} \right)^{-2-1/p}   \right) > 0, 
\end{align*}
or equivalently, (by multiplying $\alpha^{-4-2/p}$ and rearranging),
\begin{align*}
& (\alpha^2 - 2\alpha + 3)^{-2-1/p} \left( p (\alpha^2 - 2\alpha + 3) - 2(p+1)(\alpha - 1)(\alpha - 3)  \right) \\
& > (\alpha^2 + 2\alpha + 3)^{-2-1/p} \left( p (\alpha^2 + 2\alpha + 3) - 2(p+1) (\alpha +1)(\alpha +3)\right), 
\end{align*}
which is equivalent to saying that
\begin{equation*}
\frac{(-p-2)\alpha^2 + (6p + 8)\alpha + (-3p-6)}{(-p-2)\alpha^2 + (-6p-8)\alpha + (-3p-6)} < \left( \frac{\alpha^2 + 2\alpha + 3}{\alpha^2 - 2\alpha + 3} \right)^{-2-1/p}, 
\end{equation*}
because $(-p-2)\alpha^2 + (-6p-8)\alpha + (-3p-6) < 0$ for any $\alpha, p$. We prove the last inequality.
	Indeed, we may assume that $(-p-2)\alpha^2 + (6p + 8)\alpha + (-3p-6) < 0$ because if $(-p-2)\alpha^2 + (6p + 8)\alpha + (-3p-6) \geq 0$, then the desired inequality follows trivially. Hence, we have to show that
	\begin{equation}\label{eqn 1}
	\left( \frac{\alpha^2 + 2\alpha + 3}{\alpha^2 - 2\alpha + 3} \right)^{2+1/p} < \frac{(p+2)\alpha^2 + (6p + 8)\alpha + (3p+6)}{(p+2)\alpha^2 + (-6p-8)\alpha + (3p+6)}.
	\end{equation}
	Since
	\begin{equation*}
	\left( \frac{\alpha^2 + 2\alpha + 3}{\alpha^2 - 2\alpha + 3} \right)^{2+1/p} < \left( \frac{\alpha^2 + 2\alpha + 3}{\alpha^2 - 2\alpha + 3} \right)^{7/3}
	\end{equation*}
	and
	\begin{align*}
	& (\alpha^2 + 2\alpha  +3)^{7} \left( (p+2)\alpha^2 + (-6p-8)\alpha + (3p+6)  \right)^3 \\
	& \quad - (\alpha^2 - 2\alpha  +3)^{7} \left( (p+2)\alpha^2 + (6p+8)\alpha + (3p+6)\right)^3  \\
	& = (-8p^3 - 24p^2 + 32)\alpha^{19} + (-88p^3 - 1032p^2 - 2304p -1056)\alpha^{17} \\
	& \quad + (5472p^3 + 21792p^2 + 16128p + 2304)\alpha^{15} + (-25952p^3 + 62688p^2 + 126720p + 52224)\alpha^{13} \\
	& \quad + (-656496p^3-1239120p^2-856320p-217024)\alpha^{11} \\
	& \quad + (-1969488p^3 - 3717360p^2 -2568960p -651072)\alpha^{9} \\
	& \quad + (-700704p^3 + 1692576p^2 + 3421440p + 1410048)\alpha^7 \\
	& \quad + (1329696p^3 + 5295456p^2 + 3919104p + 559872)\alpha^5 \\
	& \quad + (-192456p^3 -2256984p^2 -5038848p -2309472)\alpha^3 + (-157464p^3 - 472392p^2 + 629856)\alpha \\
	& < (-8\alpha^4 + 5472)p^3 \alpha^{15} + (-20\alpha^{4} + 21792)p^2 \alpha^{15} + (-4p^2\alpha^6 +32\alpha^{6} + 52224)\alpha^{13}  \\
	& \quad +(-2304\alpha^{2} + 16128)p \alpha^{15} + (-1056\alpha^2 + 2304)\alpha^{15} + (-1032p\alpha^{4} + 62688p + 126720)p\alpha^{13} \\
	& \quad + (-656496\alpha^6 + 1329696)p^3\alpha^{5} + (-1239120\alpha^6 + 5295456)p^2 \alpha^5 + (-856320\alpha^6 + 3919104)p\alpha^5 \\
	& \quad + (-217024\alpha^6 + 559872)\alpha^{5} + \left(-700704 + \frac{1692576}{5} + \frac{3421440}{5^2} + \frac{1410048}{5^3}\right)p^3 \alpha^7 \\
	& \quad + (-192456p^3 -2256984p^2 -5038848p -2309472)\alpha^3 + \left(-157464p^3 - 472392p^2 + 629856\right)\alpha \\
	& <0
	\end{align*}
for every $p\geq 5$ and $\alpha \geq 8$, we have that the inequality (\ref{eqn 1}) holds for $\alpha \geq 8$, which in turn, implies that $h(\alpha) + h(-\alpha)$ is increasing for $\alpha \geq 8$. \\

This completes the proof.
\end{proof}

An important consequence of the above lemma is the following 
\begin{lemma} \label{lem_2/3_1}
For any even integer $n \geq 4$, we have
\begin{gather*}
2\left(n-\frac{1}{2}\right)^{s} = 2\left(n^2 -n +\frac{1}{4}\right)^{s/2} < A^{-1}_{n,s} < 2\left(n^2 -n +\frac{3}{4}\right)^{s/2}
\end{gather*}
and for any odd integer $n \geq 3$, we have
\begin{gather}\label{eqn 2}
-2\left(n^2 -n +\frac{3}{4}\right)^{s/2} < B^{-1}_{n,s} < -2\left(n^2 -n +\frac{1}{4}\right)^{s/2}= -2\Big(n-\frac{1}{2}\Big)^{s}.
\end{gather}
\end{lemma}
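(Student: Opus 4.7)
The identities $(n-1/2)^s=(n^2-n+1/4)^{s/2}$ appearing inside both inequalities are algebra, and the inner strict inequalities $2(n^2-n+1/4)^{s/2}<A_{n,s}^{-1}$ (for even $n$) and $B_{n,s}^{-1}<-2(n^2-n+1/4)^{s/2}$ (for odd $n$) are exactly what Theorem~\ref{main tool1} gives. Since $B_{n,s}=-A_{n,s}$ (and hence $B_{n,s}^{-1}=-A_{n,s}^{-1}$), the two outer strict inequalities to be proved both reduce to the single bound
\[
A_{n,s}>\frac{1}{2(n^2-n+3/4)^{s/2}}
\]
for every even $n\geq 4$ and every odd $n\geq 3$. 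I focus on this.

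Set $g(t):=(t^2-t+3/4)^{-s/2}$ and $G(t):=g(t)/2$. Telescoping $G(n)=\sum_{k\geq 0}\bigl(G(n+2k)-G(n+2k+2)\bigr)$ against the pairwise expansion of $A_{n,s}$ yields
\[
A_{n,s}-G(n)=\sum_{k\geq 0}T_k,\qquad T_k=\bigl[(n+2k)^{-s}-(n+2k+1)^{-s}\bigr]-\bigl[G(n+2k)-G(n+2k+2)\bigr].
\]
It suffices to show $T_k>0$ for every term, i.e., with $m=n+2k$, to establish the one-step inequality
\[
2\bigl(m^{-s}-(m+1)^{-s}\bigr)>g(m)-g(m+2).
\]

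The heart of the proof is an integration trick turning Lemma~\ref{lem2/3_0} into exactly this inequality. Substituting $x=N/2$ and simplifying (the factor $n^{s+1}/s$ drops out cleanly), Lemma~\ref{lem2/3_0} is equivalent to the derivative bound
\[
-g'(N)-g'(N+1)<\frac{2s}{N^{s+1}}\qquad(N\geq 4).
\]
Integrating both sides with respect to $N$ over $[m,m+1]$, valid whenever $m\geq 4$: the left-hand side telescopes to $g(m)-g(m+2)$ and the right-hand side evaluates to $2\bigl(m^{-s}-(m+1)^{-s}\bigr)$. Thus $T_k>0$ for every index with $n+2k\geq 4$, handling all even $n\geq 4$ and all odd $n\geq 5$.

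The only case not immediately covered is odd $n=3$, where $T_0$ has $m=3$ and sits just outside the range of Lemma~\ref{lem2/3_0}. Since $T_k>0$ for every $k\geq 1$ by the main argument, this reduces to verifying the single inequality $2(3^{-s}-4^{-s})>g(3)-g(5)$ for each $s=2/p$ with $p\geq 5$; this is a short direct estimate (alternatively, one checks that the proof of Lemma~\ref{lem2/3_0} already works at $x=3/2$). The main obstacle is recognizing the integration trick in the first place; once it is spotted, the one-step inequality follows at once and the remaining algebra is routine.
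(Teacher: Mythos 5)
Your plan is essentially the paper's own argument in different clothes: both telescope $A_{n,s}-\tfrac12 g(n)$ into one-step terms $T_k$, both get the inner (easy) bound from Theorem~\ref{main tool1} together with $B_{n,s}=-A_{n,s}$, and both reduce $T_k>0$ to Lemma~\ref{lem2/3_0}. The paper packages that last reduction by introducing an auxiliary function $\tilde g(x)=(2x)^{-s}-\tfrac12 g(2x)-\tfrac12 g(2x+1)$, writing $T_k=\tilde g(x)-\tilde g\!\left(x+\tfrac12\right)$ at $x=m/2$, and proving $\tilde g'<0$ on $[2,\infty)$ --- a condition which, once the common factor of $(2x)^{-s-1}$ is cleared, is precisely Lemma~\ref{lem2/3_0}; you instead recast Lemma~\ref{lem2/3_0} directly as the pointwise derivative bound $-g'(N)-g'(N+1)<2s/N^{s+1}$ for real $N\ge 4$ and integrate over $[m,m+1]$. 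These are the same computation: $\tilde g(x)>\tilde g(x+\tfrac12)$ is just $\int_x^{x+1/2}\tilde g'<0$, which under $N=2t$ becomes your integral. So I would not call the route genuinely different, though your formulation is leaner and makes the role of Lemma~\ref{lem2/3_0} as a derivative estimate more transparent. One thing you catch that the paper does not: for odd $n=3$ the first term has $m=3$, requiring the bound on $N\in[3,4]$, i.e.\ $x\in[3/2,2)$, which lies outside the $x\ge 2$ (equivalently $\alpha=4x\ge 8$) range in which Lemma~\ref{lem2/3_0} is actually established; the paper dismisses the odd case with ``a similar argument'' and its subsequent Remark only treats $n=1,2$, so this is a real (if small) gap you correctly flag. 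Your proposed patch --- check $2(3^{-s}-4^{-s})>g(3)-g(5)$ directly for $s=2/p$, $p\ge 5$ --- is the right fix, but note the inequality is quite tight (at $p=5$ the two sides are about $0.140$ and $0.137$), so it merits an actual uniform-in-$p$ argument rather than being labeled ``a short direct estimate'' and left undone.
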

\begin{proof}
	Let $n \geq 4$ be an even integer. By Theorem \ref{main tool1}, we have
	\begin{gather*}
	2\left( n - \frac{1}{2} \right)^{2/p} < A^{-1}_{n,2/p} < 2\left( n - \frac{1}{4} \right)^{2/p}.
	\end{gather*}
	Hence, it suffices to show that
	\begin{gather*}
	\frac{1}{n^{2/p}} - \frac{1}{(n+1)^{2/p}} - \frac{1}{2} \left( \left(n - \frac{1}{2}\right)^2 + \frac{1}{2} \right)^{-1/p} + \frac{1}{2} \left( \left(n + \frac{3}{2}\right)^2 + \frac{1}{2} \right)^{-1/p}   > 0
	\end{gather*}
	for any even integer $n \geq 4$. Let $f,g: \mathbb{R}_{\geq 2} \rightarrow \mathbb{R}$ be two functions defined by
	\begin{gather*}
	f(x) = \frac{1}{(2x)^{2/p}} - \frac{1}{(2x+1)^{2/p}} - \frac{1}{2} \left( \left(2x - \frac{1}{2}\right)^2 + \frac{1}{2} \right)^{-1/p} + \frac{1}{2} \left( \left(2x + \frac{3}{2}\right)^2 + \frac{1}{2} \right)^{-1/p}
	\end{gather*}
	and 
	\begin{align*}
	g(x) & = (2x)^{-2/p} - \frac{1}{2} \left( \left(2x-\frac{1}{2}\right)^2 + \frac{1}{2} \right)^{-1/p} - \frac{1}{2} \left( \left(2x+\frac{1}{2}\right)^2 + \frac{1}{2} \right)^{-1/p} \\
	& = (2x)^{-2/p} - \frac{1}{2} \left(4x^2 - 2x + \frac{3}{4} \right)^{-1/p} - \frac{1}{2} \left(4x^2 + 2x + \frac{3}{4} \right)^{-1/p}.
	\end{align*}
	Then we have $f(x) = g(x) - g(x+\frac{1}{2})$, and hence, we only need to show that $g(x)$ is decreasing.  Since
	\begin{align*}
	g'(x) & = \left(-\frac{4}{p}\right)\cdot (2x)^{-2/p - 1} \\
	& + \frac{1}{2p} \left(4x^2 - 2x + \frac{3}{4}\right)^{-1/p-1} \cdot (8x-2) + \frac{1}{2p} \left(4x^2 + 2x + \frac{3}{4}\right)^{-1/p-1} \cdot (8x+2),
	\end{align*}
	we have to show that
	\begin{align*}
	& (2x)^{-2/p-1} - \left(4x^2 - 2x + \frac{3}{4}\right)^{-1/p-1} \cdot \left(x-\frac{1}{4}\right) - \left(4x^2 + 2x + \frac{3}{4}\right)^{-1/p-1} \cdot \left(x+\frac{1}{4}\right) > 0, \\
\textrm{which }& \textrm{is equivalent to saying that} \\
	 & \left(\left(1-\frac{1}{4x}\right)^2 + \frac{1}{8x^2}\right)^{-1/p-1} \cdot \left(1-\frac{1}{4x}\right) + \left(\left(1+\frac{1}{4x}\right)^2 + \frac{1}{8x^2}\right)^{-1/p-1} \cdot \left(1+\frac{1}{4x}\right) < 2.
	\end{align*}
Then it follows from Lemma \ref{lem2/3_0} that $g(x)$ is decreasing for $x \geq 2$. Also, a similar argument can be used to show that (\ref{eqn 2}) holds for any odd integer $n \geq 3.$ \\

This completes the proof.
\end{proof}
\begin{remark}
The inequalities in the statement of Lemma \ref{lem_2/3_1} also hold when $n=1,2$. For example, we have
\begin{equation*}
B^{-1}_{1,2/5} + 2\cdot \left(\frac{3}{4}\right)^{1/5} = 0.179457... > 0 ~\textrm{and}~ A^{-1}_{2,2/5} - 2\cdot \left(\frac{11}{4} \right)^{1/5} = -0.0374817... < 0.
\end{equation*}
\end{remark}

Now, we give a result on the finiteness of the integer points of certain affine curves, which will be used later:
\begin{lemma}\label{Siegel lem}
The affine curve $C_m : x^p -2^p y^2 -2^p y +2^{p-2} +m =0$ defined over $\mathbb Q$ has only finitely many integer points for each $1 \leq m \leq 2^{p-1}-1.$ 
\end{lemma}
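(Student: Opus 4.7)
The plan is to reduce the equation defining $C_m$ to a hyperelliptic equation of the form $Y^2 = f(X)$ with $f \in \mathbb{Q}[X]$ of degree $p \geq 5$ with distinct roots, and then to invoke Theorem \ref{thm_hyper} with $K=\mathbb{Q}$ and $S=\{2,\infty\}$ (so that $R_S = \mathbb{Z}[\tfrac{1}{2}]$), exactly in the spirit of Example \ref{main exam}.

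First, I would rewrite the defining equation of $C_m$ by completing the square in $y$. Rearranging, $x^p = 2^p y^2 + 2^p y - 2^{p-2} - m = 2^{p-2}\bigl((2y+1)^2 - 2\bigr) - m$, so
\begin{equation*}
(2y+1)^2 = \frac{x^p + m + 2^{p-1}}{2^{p-2}}.
\end{equation*}
Dividing both sides by $4$ and setting $Y = y + \tfrac{1}{2}$ then yields
\begin{equation*}
Y^2 \;=\; \frac{1}{2^p}\, x^p + \frac{m + 2^{p-1}}{2^p} \;=:\; f(x),
\end{equation*}
which is precisely the hyperelliptic form (and specializes to Example \ref{main exam} when $p=5$ and $m=1$).

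Next I would check the hypotheses of Theorem \ref{thm_hyper}. The polynomial $f(X) \in \mathbb{Q}[X]$ has degree $p \geq 5 \geq 3$, and its roots over $\overline{\mathbb{Q}}$ are the $p$ distinct $p$-th roots of $-(m + 2^{p-1})$; distinctness is automatic because $m + 2^{p-1} > 0$ (using $1 \leq m \leq 2^{p-1}-1$, although only $m \neq -2^{p-1}$ is actually needed). Therefore, with $K=\mathbb{Q}$, $S=\{2,\infty\}$, $R_S=\mathbb{Z}[\tfrac12]$, Theorem \ref{thm_hyper} guarantees that the set
\begin{equation*}
\Bigl\{\,(X,Y)\in \mathbb{Z}\bigl[\tfrac{1}{2}\bigr]^{2}\;\big|\; Y^2=f(X)\,\Bigr\}
\end{equation*}
is finite.

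Finally, I would transfer this finiteness statement back to $C_m$. Any integer point $(x,y) \in C_m(\mathbb{Z})$ produces a point $(x, y+\tfrac{1}{2}) \in \mathbb{Z}[\tfrac{1}{2}]^2$ on the curve $Y^2 = f(X)$, and the map $(x,y)\mapsto (x,y+\tfrac{1}{2})$ is clearly injective. Hence $C_m(\mathbb{Z})$ is finite, which is what we needed. The only non-mechanical point is choosing the substitution that simultaneously clears the cross term in $y$ and lands the new variable in $\mathbb{Z}[\tfrac{1}{2}]$ rather than in $\mathbb{Z}$, so that Siegel's theorem is applied with the correct $S$-integer ring.
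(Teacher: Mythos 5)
Your argument is correct and follows the same route as the paper: complete the square to put $C_m$ in the hyperelliptic form $Y^2=\frac{1}{2^p}x^p+\frac{2^{p-1}+m}{2^p}$ with $Y=y+\frac{1}{2}$, and then apply Theorem \ref{thm_hyper} over $R_S=\mathbb{Z}\bigl[\frac{1}{2}\bigr]$ with $S=\{2,\infty\}$. You additionally spell out the verification that $f$ has distinct roots (since $m+2^{p-1}\neq 0$), a hypothesis the paper leaves implicit, but the substance of the argument is identical.
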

\begin{proof}
Let $1 \leq m \leq 2^{p-1}-1$ be fixed. By completing the square and rearranging, the defining equation of $C_m$ can be written as
\begin{equation}\label{eqn 3}
\left(y+\frac{1}{2}\right)^2 = \frac{1}{2^p} \cdot x^p + \frac{2^{p-1}+m}{2^p}.
\end{equation} 
By letting $Y=y+\frac{1}{2},$ the equation (\ref{eqn 3}) becomes
\begin{equation}\label{eqn 4}
Y^2 = \frac{1}{2^p} \cdot x^p + \frac{2^{p-1}+m}{2^p}.
\end{equation}
Let $S=\{2, \infty\}$ be a finite set of places of $\mathbb{Q}$, and let $f(x)= \frac{1}{2^p} \cdot x^p + \frac{2^{p-1}+m}{2^p}.$ Then the equation $Y^2 = f(x)$ has only finitely many solutions in $R_S= \mathbb{Z} \left[\frac{1}{2} \right]$ by Theorem \ref{thm_hyper}, which in turn, implies that $C_m (\mathbb{Z})$ is also finite. (Note that if $(a,b)$ is an integer point of $C_m$, then $\left(a, b+\frac{1}{2} \right)$ is a solution of the equation (\ref{eqn 4}) in $R_S=\mathbb{Z} \left[\frac{1}{2}\right].$) Since $m$ was arbitrary, the proof is complete.
\end{proof}

In the sequel, let $C_m$ denote the affine curve defined as in Lemma \ref{Siegel lem} for $1 \leq m \leq 2^{p-1}-1$. Using the above finiteness result on integral points, we have the following
\begin{lemma}\label{no int lem2}
There exists an integer $n_0>0$ with the property that there is no integer between $ 2 \left(n^2 -n + \frac{1}{4} \right)^{\frac{s}{2}}$ and $ 2 \left(n^2 -n+ \frac{3}{4}  \right)^{\frac{s}{2}}$ for every integer $n \geq n_0.$
\end{lemma}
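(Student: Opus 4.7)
The plan is to reduce the non-existence of an integer in the given interval to the finiteness of integer points on the curves $C_m$ from Lemma \ref{Siegel lem}, one for each admissible residue class. Concretely, suppose for some $n$ there exists an integer $x$ with
\[
2\left(n^2-n+\tfrac{1}{4}\right)^{s/2} < x < 2\left(n^2-n+\tfrac{3}{4}\right)^{s/2}.
\]
Since $s/2 = 1/p$, raising to the $p$-th power gives
\[
2^p n^2 - 2^p n + 2^{p-2} < x^p < 2^p n^2 - 2^p n + 3\cdot 2^{p-2},
\]
and since $x^p$ is an integer, one has
\[
x^p = 2^p n^2 - 2^p n + 2^{p-2} + m
\]
for some integer $m$ with $1 \leq m \leq 2^{p-1}-1$. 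The length of this interval is exactly $2^{p-1}$, which is what pins down the range of $m$ and matches the index range in Lemma \ref{Siegel lem}.

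Next, I would observe that for each such $m$, the pair $(x,n)$ is an integer point on the affine curve defined (up to the sign convention / shift $y\mapsto -(y+1)$ and reindexing used in the proof of Lemma \ref{Siegel lem}) by an equation of the form $C_m$, equivalently a hyperelliptic equation $Y^2 = \tfrac{1}{2^p}x^p + c_m$ with distinct roots after a substitution $Y = 2n-1$ (or $Y = n - \tfrac{1}{2}$, viewed in $\mathbb{Z}[\tfrac{1}{2}]$). By Lemma \ref{Siegel lem} (which invokes Theorem \ref{thm_hyper} with $S=\{2,\infty\}$), this curve has only finitely many integer points, so there exists $N_m$ such that no integer solution $(x,n)$ with $n \geq N_m$ exists.

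Finally, since there are only finitely many admissible residues, I would set
\[
n_0 := \max_{1 \leq m \leq 2^{p-1}-1} N_m + 1,
\]
and conclude that for every integer $n \geq n_0$, no integer can lie between $2(n^2-n+\tfrac{1}{4})^{s/2}$ and $2(n^2-n+\tfrac{3}{4})^{s/2}$, as required.

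The main obstacle is the bookkeeping in the second step: verifying that the Diophantine equation $x^p = 2^p n^2 - 2^p n + 2^{p-2} + m$ produced here really does reduce to (a curve in the same family as) $C_m$ after completing the square in $n$ and clearing the denominator $2^p$, so that Lemma \ref{Siegel lem} genuinely applies; everything else is a clean union-of-finite-sets argument. One should also remark that the condition $p \geq 5$ guarantees $\deg f \geq 3$ in the hyperelliptic form, which is what makes Theorem \ref{thm_hyper} applicable.
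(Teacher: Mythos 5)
Your proposal is correct and matches the paper's argument: raise the inequality to the $p$-th power to place $(x,n)$ on one of the finitely many curves $C_m$ from Lemma \ref{Siegel lem}, use the finiteness of $\bigcup_{m=1}^{2^{p-1}-1} C_m(\mathbb{Z})$, and bound $n_0$ by a maximum over the finitely many offending $n$ (the paper packages this as an injection from the set of bad $n$ into that finite union, which is the same idea as your $\max_m N_m + 1$). Your ``bookkeeping'' caveat is in fact legitimate---substituting $(x,y)=(a,n)$ into the printed equation of $C_m$ does not literally reproduce $a^p = 2^p n^2 - 2^p n + 2^{p-2} + l$, so a sign in the definition of $C_m$ needs adjusting---but this is a harmless typo and has no effect on the applicability of Siegel's theorem or on the conclusion.
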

\begin{proof}
Let $S$ be the set of integers $n$ such that there is an integer between $2 \left(n^2 -n + \frac{1}{4} \right)^{\frac{s}{2}}$ and $ 2 \left(n^2 -n+ \frac{3}{4}  \right)^{\frac{s}{2}}.$ Let $n \in S$ and suppose that $a$ is an integer with $2 \left(n^2 -n + \frac{1}{4} \right)^{\frac{s}{2}} < a < 2 \left(n^2 -n+ \frac{3}{4}  \right)^{\frac{s}{2}}.$ (Note that, in view of \cite[page 9]{Song2018}, there is exactly one such $a.$) It follows that 
\begin{equation*}
2^p n^2 - 2^{p}n +2^{p-2} < a^p < 2^p n^2 - 2^{p}n +3 \cdot 2^{p-2}.
\end{equation*}
We may write $a^p = 2^p n^2 -2^p n + 2^{p-2}+l$ for some $1 \leq l \leq 2^{p-1}-1$ so that we have $(a,n) \in C_l(\mathbb{Z}).$ (Note also that the uniqueness of $a$ guarantees the uniqueness of such $l.$) This observation gives rise to a well-defined set map $\displaystyle \varphi : S \rightarrow \bigcup_{m=1}^{2^{p-1}-1} C_m(\mathbb{Z})$ given by $\varphi (n) = (a,n)$, where $a$ is uniquely determined as above. It is easy to see that $\varphi$ is injective, by construction. Now, by Lemma \ref{Siegel lem}, the set $\displaystyle \bigcup_{m=1}^{2^{p-1}-1} C_m(\mathbb{Z})$ is finite, and hence, it follows that $S$ is also finite. Let $n_0 = \max S +1.$ Then for any integer $n \geq n_0$, we have $n \not \in S,$ which is the desired result. \\

This completes the proof.
\end{proof}
By a similar argument, we can also have the following
\begin{lemma}\label{no int lem3}
There exists an integer $n_1>0$ with the property that there is no integer between $ -2 \left(n^2 -n + \frac{3}{4} \right)^{\frac{s}{2}}$ and $ -2 \left(n^2 -n+ \frac{1}{4}  \right)^{\frac{s}{2}}$ for every integer $n \geq n_1.$
\end{lemma}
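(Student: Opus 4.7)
The plan is to reduce Lemma \ref{no int lem3} to the already-established Lemma \ref{no int lem2} by exploiting the evident sign symmetry between the two intervals. First I would observe that, for any integer $a$, one has $-2(n^2 - n + 3/4)^{s/2} < a < -2(n^2 - n + 1/4)^{s/2}$ if and only if the integer $-a$ satisfies $2(n^2 - n + 1/4)^{s/2} < -a < 2(n^2 - n + 3/4)^{s/2}$. Applying Lemma \ref{no int lem2} then immediately yields the result with $n_1 = n_0$.

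To match the author's hint that ``a similar argument'' works, I would also run the proof in direct parallel to Lemma \ref{no int lem2}. Let $S$ denote the set of integers $n \geq 1$ for which there exists an integer $a$ in the prescribed open interval. Such an $a$ is necessarily negative, and because $p$ is odd the $p$-th power preserves the strict ordering, giving
\[
2^p n^2 - 2^p n + 2^{p-2} < (-a)^p < 2^p n^2 - 2^p n + 3\cdot 2^{p-2}.
\]
Hence $(-a)^p = 2^p n^2 - 2^p n + 2^{p-2} + l$ for a unique integer $1 \leq l \leq 2^{p-1}-1$, which places $(-a, n)$ on the curve $C_l$. As in the proof of Lemma \ref{no int lem2}, the assignment $n \mapsto (-a, n)$ is a well-defined injection of $S$ into $\bigcup_{m=1}^{2^{p-1}-1} C_m(\mathbb{Z})$, which is finite by Lemma \ref{Siegel lem}. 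Setting $n_1 = \max S + 1$ completes the argument.

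The main obstacle is essentially bookkeeping: one has to check that raising the two negative quantities to the odd power $p$ really does produce an inequality in which $(-a)^p$ lands in the range $\{2^p n^2 - 2^p n + 2^{p-2} + 1,\dots,2^p n^2 - 2^p n + 2^{p-2} + (2^{p-1}-1)\}$ expected by Lemma \ref{Siegel lem}, so that the oddness of $p$ (a standing hypothesis throughout Section \ref{2/p}) is used decisively. Once this verification is in place, the rest of the argument is a direct transcription of the proof of Lemma \ref{no int lem2}, so no further difficulty arises.
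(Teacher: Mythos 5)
Your proposal is correct and matches the paper, which simply asserts Lemma \ref{no int lem3} ``by a similar argument'' to Lemma \ref{no int lem2}; your direct negation reduction (an integer $a$ lies in the negative interval iff $-a$ lies in the positive one, so $n_1 = n_0$ works) is the cleanest way to make that precise, and your parallel rerun of the argument is also sound. Both routes are faithful to the paper's intent.
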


Using Lemmas \ref{lem_2/3_1}, \ref{no int lem2}, and \ref{no int lem3}, we can prove the following
\begin{theorem}\label{main res2}
There exist integers $n_0, n_1>0$ such that 
\begin{gather*}
[A_{n,s}^{-1}] = \left[ 2\left( n-\frac{1}{2} \right)^s \right]
\end{gather*}
for every even integer $n \geq n_0$, and 
\begin{gather*}
[B_{n,s}^{-1}] = \left[ -2\left( n-\frac{1}{2} \right)^s \right]
\end{gather*}
for every odd integer $n \geq n_1.$
\end{theorem}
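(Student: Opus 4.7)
The plan is to combine the three previous lemmas by a simple squeezing argument. Concretely, Lemma \ref{lem_2/3_1} pins down $A_{n,s}^{-1}$ (respectively $B_{n,s}^{-1}$) to a short interval whose left endpoint is exactly $2(n-\tfrac{1}{2})^s = 2(n^2-n+\tfrac{1}{4})^{s/2}$ and whose right endpoint is $2(n^2-n+\tfrac{3}{4})^{s/2}$; Lemmas \ref{no int lem2} and \ref{no int lem3} then say that this interval contains no integer once $n$ is large enough. From these two pieces the equality of integer parts is immediate.

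In more detail, set $n_0$ to be the maximum of $4$ and the threshold produced by Lemma \ref{no int lem2}, and let $n$ be any even integer with $n \geq n_0$. By Lemma \ref{lem_2/3_1} we have
\begin{equation*}
2\left(n-\tfrac{1}{2}\right)^s \;<\; A_{n,s}^{-1} \;<\; 2\left(n^2-n+\tfrac{3}{4}\right)^{s/2}.
\end{equation*}
If it were the case that $\lfloor A_{n,s}^{-1}\rfloor \neq \lfloor 2(n-\tfrac{1}{2})^s \rfloor$, then since $A_{n,s}^{-1} > 2(n-\tfrac{1}{2})^s$ we would necessarily have $\lfloor A_{n,s}^{-1} \rfloor \geq \lfloor 2(n-\tfrac{1}{2})^s\rfloor +1$, producing an integer $k$ with $2(n-\tfrac{1}{2})^s < k \leq A_{n,s}^{-1} < 2(n^2-n+\tfrac{3}{4})^{s/2}$. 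This contradicts the choice of $n_0$ via Lemma \ref{no int lem2}, so $\lfloor A_{n,s}^{-1}\rfloor = \lfloor 2(n-\tfrac{1}{2})^s\rfloor$, as desired.

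The argument for $B_{n,s}^{-1}$ is entirely analogous: take $n_1$ to be the maximum of $3$ and the threshold in Lemma \ref{no int lem3}, and use the second half of Lemma \ref{lem_2/3_1}, namely
\begin{equation*}
-2\left(n^2-n+\tfrac{3}{4}\right)^{s/2} \;<\; B_{n,s}^{-1} \;<\; -2\left(n-\tfrac{1}{2}\right)^s,
\end{equation*}
together with the absence of integers in that interval for $n \geq n_1$ odd. Since all of the real work (the analytic inequality, the reduction to Siegel's theorem, the extraction of a finite exceptional set) has already been carried out in Lemmas \ref{lem_2/3_1}, \ref{no int lem2}, and \ref{no int lem3}, there is no substantial obstacle here; the only thing to watch is to take the threshold large enough that Lemma \ref{lem_2/3_1} (which requires $n \geq 4$ in the even case and $n \geq 3$ in the odd case) applies simultaneously with the Siegel-based non-integer-in-interval statements.
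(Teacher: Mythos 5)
Your proposal is correct and matches the paper's proof almost verbatim: both combine Lemma \ref{lem_2/3_1} with Lemmas \ref{no int lem2} and \ref{no int lem3} in the same squeezing argument, with the only difference being that you spell out a bit more explicitly why the absence of an integer in the interval forces the integer parts to coincide.
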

\begin{proof}
By Lemmas \ref{lem_2/3_1} and \ref{no int lem2}, there exists an integer $n_0 >0$ such that
\begin{equation*}
2 \left(n - \frac{1}{2} \right)^s < A_{n,s}^{-1} < 2 \left(n^2 - n + \frac{3}{4} \right)^{\frac{s}{2}}
\end{equation*}
and there is no integer between $2 \left(n - \frac{1}{2} \right)^s$ and $2 \left(n^2 - n + \frac{3}{4} \right)^{\frac{s}{2}}$ for every even integer $n \geq n_0.$ Then it follows that $\left[A_{n,s}^{-1}\right]=\left[2 \left(n-\frac{1}{2} \right)^s \right]$ for every even integer $n  \geq n_0.$ Similarly, by Lemmas \ref{lem_2/3_1} and \ref{no int lem3}, there exists an integer $n_1 >0$ such that
\begin{equation*}
-2 \left(n^2 - n + \frac{3}{4} \right)^{\frac{s}{2}} < B_{n,s}^{-1} < -2 \left(n- \frac{1}{2} \right)^{s}
\end{equation*}
and there is no integer between $-2 \left(n^2 - n + \frac{3}{4} \right)^{\frac{s}{2}}$ and $-2 \left(n- \frac{1}{2} \right)^{s}$ for every odd integer $n \geq n_1.$ Then it follows that $\left[B_{n,s}^{-1}\right]=\left[-2 \left(n-\frac{1}{2} \right)^s \right]$ for every odd integer $n  \geq n_1.$  \\

This completes the proof. 
\end{proof}

As an immediate consequence of Theorem \ref{main res2}, we have:
\begin{corollary}\label{main cor 2}
There exists an integer $N >0$ such that we have
\begin{equation*}
\left[\frac{1}{1-2^{1-s}} \cdot \zeta_n (s)^{-1} \right] = \left[ (-1)^{n+1} \cdot 2 \left(n - \frac{1}{2} \right)^s \right]
\end{equation*}
for every integer $n \geq N.$
\end{corollary}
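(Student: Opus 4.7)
The plan is to obtain Corollary \ref{main cor 2} as a direct consequence of Theorem \ref{main res2} and the identity \eqref{eq:zetaAB} relating $\zeta_n(s)$ to $A_{n,s}$ and $B_{n,s}$, exactly paralleling the proof of Corollary \ref{main cor 1}. Concretely, I would take $N = \max\{n_0, n_1\}$ with $n_0, n_1$ supplied by Theorem \ref{main res2}, and argue separately according to the parity of $n \geq N$.

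For even $n$, equation \eqref{eq:zetaAB} yields $\frac{1}{1-2^{1-s}}\zeta_n(s)^{-1} = -A_{n,s}^{-1}$, while $(-1)^{n+1}\cdot 2\left(n-\tfrac{1}{2}\right)^s = -2\left(n-\tfrac{1}{2}\right)^s$, so the desired equality reduces to $\lfloor -A_{n,s}^{-1}\rfloor = \lfloor -2(n-\tfrac{1}{2})^s\rfloor$. Using the identity $\lfloor -x\rfloor = -\lfloor x \rfloor - 1$ for $x \notin \mathbb{Z}$, this follows from Theorem \ref{main res2} once both $A_{n,s}^{-1}$ and $2(n-\tfrac{1}{2})^s$ are shown to be non-integers. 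The former is immediate: the strict inequality $A_{n,s}^{-1} > 2(n-\tfrac{1}{2})^s$ in Lemma \ref{lem_2/3_1} together with Theorem \ref{main res2} traps $A_{n,s}^{-1}$ strictly between two consecutive integers. For the latter, a short $2$-adic argument suffices: if $2\left(n-\tfrac{1}{2}\right)^{2/p} = m \in \mathbb{Z}_{>0}$, then $2^{p-2}(2n-1)^2 = m^p$, so $v_2(m^p) = p-2$, which is impossible since $p \nmid p-2$ for $p \geq 5$. The case of odd $n$ is entirely symmetric, invoking the $B_{n,s}$ half of \eqref{eq:zetaAB} together with the odd-$n$ conclusion of Theorem \ref{main res2}; here $\frac{1}{1-2^{1-s}}\zeta_n(s)^{-1} = -B_{n,s}^{-1}$ is positive and matches $2(n-\tfrac{1}{2})^s = (-1)^{n+1}\cdot 2(n-\tfrac{1}{2})^s$, so one can read off the floors directly from Theorem \ref{main res2} without any sign manipulation.

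No genuinely new difficulty arises beyond what was already handled in Theorem \ref{main res2}; the substantive obstacle lies in establishing that theorem, which rests on Lemma \ref{no int lem2} and ultimately on Siegel's theorem via Lemma \ref{Siegel lem}. From this vantage point, Corollary \ref{main cor 2} is essentially a bookkeeping step, and I would expect the full write-up to occupy only a handful of lines, much like the proof of Corollary \ref{main cor 1}.
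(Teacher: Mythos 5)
Your proposal is correct and follows the same route the paper takes (the paper's own proof is a one-liner: ``This follows from equation \eqref{eq:zetaAB} and Theorem \ref{main res2}''). You have in fact been more careful than the paper: passing from $[A_{n,s}^{-1}] = [2(n-\tfrac12)^s]$ to $[-A_{n,s}^{-1}] = [-2(n-\tfrac12)^s]$ (and likewise for $B$) really does require knowing that the quantities involved are non-integers, and your $2$-adic argument ($2^{p-2}(2n-1)^2 = m^p$ forces $p \mid p-2$, impossible for $p\geq 5$) cleanly disposes of the possibility that $2(n-\tfrac12)^{2/p}$ is an integer, while the trapping argument from Lemma \ref{lem_2/3_1} and the no-integer-between property handles $A_{n,s}^{-1}$ and $B_{n,s}^{-1}$. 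This detail is suppressed in the paper's proof and is actually also implicitly needed in the proof of Theorem \ref{main res2} for the odd-$n$ case, where the asserted inequality places $B_{n,s}^{-1}$ strictly below $-2(n-\tfrac12)^s$, so equality of floors fails precisely when $-2(n-\tfrac12)^s$ is an integer.

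One small inaccuracy: you claim that for odd $n$ one can ``read off the floors directly from Theorem \ref{main res2} without any sign manipulation.'' That is not so. Theorem \ref{main res2} gives $[B_{n,s}^{-1}] = [-2(n-\tfrac12)^s]$, whereas for the corollary you need $[-B_{n,s}^{-1}] = [2(n-\tfrac12)^s]$; this is exactly the same negation step as in the even case and requires the same non-integrality check. (What you could do without sign manipulation is bypass Theorem \ref{main res2} and apply the no-integer-between property directly to the chain $2(n-\tfrac12)^s < -B_{n,s}^{-1} < 2(n^2-n+\tfrac34)^{s/2}$ obtained by negating Lemma \ref{lem_2/3_1}, but that is not the same as invoking Theorem \ref{main res2} as stated.) This slip does not affect the validity of the overall argument.
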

\begin{proof}
This follows from the equation (\ref{eq:zetaAB}) and Theorem \ref{main res2}.
\end{proof}
\begin{remark}
If $p=3,$ then we may take $N=1$ in view of \cite[Theorem 3.12]{Song2019}.
\end{remark}\label{rem 2}
We conclude this paper with the following
\begin{remark}
In light of \cite[Remark D.9.5]{3}, it might be possible to find such a suitable $N >0$ in Corollaries \ref{main cor 1} and \ref{main cor 2} by adopting a theorem of Baker (see \cite{Baker}).    
\end{remark}


\begin{thebibliography}{99}
\bibitem{Baker}
A. Baker, Transcendental Number Theorey, Cambridge University Press, 1975.
\bibitem{3}
M. Hindry, J. Silverman, Diophantine Geometry, An Introduction, Springer, 2000.
\bibitem{Song2018}
D. Kim, K. Song, The inverses of tails of the Riemann zeta function. Journal of inequalities and applications, 2018(1), 157.
\bibitem{Song2019}
K. Song, The inverses of tails of the Riemann zeta function for some real and natural numbers, thesis, 2019.
\end{thebibliography}
\end{document}